\title[Determinantal Representations: Past, Present, and Future]
{LMI Representations of Convex Semialgebraic Sets and Determinantal Representations of Algebraic Hypersurfaces:
Past, Present, and Future}
\author[V. Vinnikov]{Victor Vinnikov}
\address{Department of Mathematics \\
Ben-Gurion University of the Negev\\
Beer-Sheva, Israel, 84105}
\email{vinnikov@math.bgu.ac.il}
\dedicatory{To Bill Helton, on the occasion of his 65th birthday}
\newcommand{\trans}{\top}
\newcommand{\sym}{{\mathbb S}}
\newcommand{\herm}{{\mathbb H}}
\newcommand{\mat}[2]{\ensuremath{{#1}^{#2\times #2}}}
\newcommand{\rmat}[3]{\ensuremath{{#1}}^{#2\times #3}}
\newcommand{\PP}{{\mathbb P}}
\newcommand{\NN}{{\mathbb N}}
\newcommand{\ZZ}{{\mathbb Z}}
\newcommand{\RR}{{\mathbb R}}
\newcommand{\CC}{{\mathbb C}}
\newcommand{\cB}{{\mathcal B}}
\newcommand{\cC}{{\mathcal C}}
\newcommand{\cE}{\boldsymbol{\mathcal E}}
\newcommand{\cI}{{\mathcal I}}
\newcommand{\cK}{\boldsymbol{\mathcal K}}
\newcommand{\cL}{\boldsymbol{\mathcal L}}
\newcommand{\cO}{\boldsymbol{\mathcal O}}
\newcommand{\cV}{{\mathcal V}}
\newcommand{\bomega}{\boldsymbol{\omega}}
\newcommand{\yashar}{{\mathcal L}}
\newcommand{\tcE}{\widetilde{\cE}}
\newcommand{\tcL}{\widetilde{\cL}}
\newcommand{\tcV}{\widetilde{\cV}}
\newcommand{\qut}{{\tilde q}}
\newcommand{\Int}{\operatorname{Int}}
\newcommand{\adj}{\operatorname{adj}}
\newcommand{\coker}{\operatorname{coker}}
\newcommand{\Proj}{\operatorname{Proj}}
\newcommand{\sing}{\text{sing}}
\theoremstyle{plain}
\newtheorem{thm}{Theorem}[section]
\newtheorem{lem}[thm]{Lemma}
\newtheorem{prop}[thm]{Proposition}
\newtheorem{conj}[thm]{Conjecture}
\theoremstyle{definition}
\theoremstyle{remark}
\numberwithin{equation}{section}
\begin{document}

\begin{abstract}
10 years ago or so Bill Helton
introduced me to some mathematical problems arising from semidefinite programming.
This paper is a partial account of what was and what is happening with one of these problems,
including many open questions and some new results.
\end{abstract}

\maketitle

\section{Introduction}
\label{sec:intro}

Semidefinite programming (SDP) is probably the most important new development in optimization in the last two decades.
The (primal) semidefinite programme is to minimize an affine linear functional $\ell$ on $\RR^d$ 
subject to a linear matrix inequality (LMI) constraint
\begin{equation*}
A_0 + x_1 A_1 + \cdots + x_d A_d \geq 0;
\end{equation*}
here $A_0,A_1,\ldots,A_d \in \sym\mat{\RR}{n}$ (real symmetric $n \times n$ matrices) for some $n$ and $Y \geq 0$
means that $Y \in \sym\mat{\RR}{n}$ is positive semidefinite (has nonnegative eigenvalues or equivalently satisfies
$y^\trans Y y \geq 0$ for all $y \in \RR^n$). 
This can be solved efficiently, both theoretically (finding an approximate solution with a given accuracy $\epsilon$
in a time that is polynomial in $\log(1/\epsilon)$ and in the input size of the problem)
and in many concrete situations,
using interior point methods.
Notice that semidefinite programming is a far reaching extension of linear programming
(LP) which corresponds to the case when the real symmetric matrices $A_0,A_1,\ldots,A_d$ commute (i.e., are simultaneously
diagonalizable). 
The literature on the subject is quite vast, and we only mention 
the pioneering book \cite{NN94}, the surveys \cite{VB96} and \cite{Nem06}, 
and the book \cite{SIG97} for applications to systems and control. 

One very basic mathematical question 
is which convex sets arise as feasibility sets for SDP? In other words, 
{\em given a convex set $\cC$,
do there exist $A_0,A_1,\ldots,A_d \in \sym\mat{\RR}{n}$ for some $n$ such that}
\begin{equation} \label{eq:lmirep}
\cC = \left\{x=(x_1,\ldots,x_d) \in \RR^d \colon 
A_0 + x_1 A_1 + \cdots + x_d A_d \geq 0\right\}?
\end{equation}
We refer to \eqref{eq:lmirep} as a {\em LMI representation} of $\cC$
\footnote{
We can also consider a (complex) self-adjoint LMI representation of $\cC$, meaning
that $A_0,A_1,\ldots,A_d \in \herm\mat{\CC}{n}$ (complex hermitian $n \times n$ matrices) for some $n$.
If $A = B + iC \in \herm\mat{\CC}{n}$ with $B,C \in \mat{\RR}{n}$, and we set
$\tilde A = \left[\begin{smallmatrix} B & -C \\ C & B \end{smallmatrix}\right] \in \sym\mat{\RR}{2n}$,
then $A \geq 0$ if and only  $\tilde A \geq 0$ and $\det \tilde A = (\det A)^2$.
So a self-adjoint LMI representation gives a real symmetric LMI representation
as defined in the main text with the size of matrices doubled and the determinant of
the linear matrix polynomial squared, see \cite[Section 1.4]{RG95} and \cite[Lemma 2.14]{NT1}.}\label{foot:herm}.
Sets having a LMI representation are also called {\em spectrahedra}.
This notion was introduced and studied in \cite{RG95}, and the above question ---
which convex sets admit a LMI representation, i.e., are spectrahedra --- was formally posed in \cite {PS03}.
A complete answer for $d=2$ was obtained in \cite{HV07}, though there are still outstanding computational questions,
see \cite{Hen10,PSV1,PSV2};
for $d>2$, no answer is known,
though the recent results of \cite{Br1,NT1,NPT1} shed some additional light on the problem. 
It is the purpose of this paper to survey some aspects of the current state of the affairs.

Since a real symmetric matrix is positive semidefinite if and only if all of its principal minors are nonnegative,
the set on the right-hand side of \eqref{eq:lmirep} coincides with the set where all the principal minors of
$A_0 + x_1 A_1 + \cdots + x_d A_d$ are nonnegative. Therefore if a convex set $\cC$ admits a LMI representation 
then $\cC$ is a {\em basic closed semialgebraic set} 
(i.e., a set defined by finitely many nonstrict polynomial inequalities). 
However, as shown in \cite{HV07}, $\cC$ is in fact much more special: it is a {\em rigidly convex algebraic interior},
i.e., an {\em algebraic interior} whose {\em minimal defining polynomial} satisfies the {\em real zero} ($RZ$)
condition with respect to any point in the interior of $\cC$.
Furthermore, LMI representations are (essentially) 
{\em positive real symmetric determinantal representations} of certain multiples of the minimal
defining polynomial of $\cC$. 
This reduces the question of the existence (and a construction) of LMI representations to an old problem of algebraic
geometry --- we only mention here the classical paper \cite{D1900} and refer to \cite{Bea00}, \cite[Chapter 4]{Dol1},
and \cite{KV1} 
for a detailed bibliography ---
but with two additional twists: first, we require positivity; second, there is a freedom provided by allowing multiples
of the given polynomial.

This paper is organized as follows. In Section \ref{sec:lmitodetrep} we define rigidly convex sets and $RZ$ polynomials,
and explain why LMI representations are determinantal representations. 
In Section \ref{sec:resultsdetrep} we discuss 
some of what is currently known and unknown about determinantal representations, with a special emphasis on
positive real symmetric determinantal representations. 
In Section \ref{sec:sheaves} 
we review some of the ways to (re)construct a determinantal representation starting from its kernel sheaf, 
especially the construction of the adjoint matrix of a determinantal representation that goes back to \cite{D1900}
and was further developed in \cite{Vin89,BV96,KV1}.
In Section \ref{sec:interlace} we show how this construction yields positive self-adjoint determinantal representations
in the case $d=2$ by using a $RZ$ polynomial that {\em interlaces} the given $RZ$ polynomial.
This provides an alternative proof of the main result of \cite{HV07} (in a slightly weaker form since we obtain
a representation that is self-adjoint rather than real symmetric) which is constructive algebraic in that it avoids
the use of theta functions.

We have concentrated in this paper on the non-homogenous setting (convex sets) rather than on the homogeneous setting
(convex cones). In the homogeneous setting, $RZ$ polynomials correspond to {\em hyperbolic polynomials}
and rigidly convex algebraic interiors correspond to their {\em hyperbolicity cones},
see, e.g.,  \cite{Ga51,Ga59,Lax58,Nui68,Gu97,BGLS01,Re06}.
Theorem \ref{thm:2d} then provides a solution the Lax conjecture concerning homogeneous hyperbolic
polynomials in three variables, see \cite{LPR05}, whereas Conjecture \ref{conj:gend}, which may be called
the generalized Lax conjecture, states that {\em any hyperbolicity cone is a semidefinite slice}, i.e., 
equals the intersection
of the cone of positive semidefinite matrices with a linear subspace.

Finally, the LMI representation problem considered here is but one of the several important problems of this kind
arising from SDP. Other major problems have to do with lifted LMI representations (see \cite{Las09,HN09,HN10})
and with the free noncommutative setting (see \cite{HMcCPV09,HMcC1}).

\subsection*{Acknowledgments}
Apart from my joint work with Bill Helton, a lot of what is described here is based 
on earlier joint work with Joe Ball,
as well as on more recent collaboration with Dmitry Kerner.
It is a pleasure to thank Didier Henrion, Tim Netzer, Daniel Plaumann, 
and Markus Schweighofer for many useful discussions.   

\section{From LMI representations of convex sets to determinantal representations of polynomials}
\label{sec:lmitodetrep}

\subsection{}

A closed set $\cC$ in ${\mathbb R}^d$  is called an {\em algebraic interior} \cite[Section 2.2]{HV07}
if there is a polynomial $p \in \RR[x_1,\ldots,x_d]$
such that  $\cC$ equals the closure of
a connected component of
$$\{x \in {\mathbb R}^d \colon p(x) > 0\}.$$
In other words, there is a $p \in \RR[x_1,\ldots,x_d]$
which vanishes on the boundary $\partial \cC$ of ${\mathcal C}$
and such that
$\{ x \in \cC \colon p(x) > 0\}$ is connected with closure equal 
to $\cC$.
(Notice that in general $p$ may vanish also at some points in the interior
of $\cC$; for example, look at $p(x_1,x_2) = x_2^2 - x_1^2 (x_1 - 1)$.)
We call $p$ a defining polynomial of $\cC$.
It is not hard to show that if $\cC$ is an algebraic interior
then a minimal degree defining polynomial $p$ of $\cC$
is unique (up to a multiplication by a positive constant);
we call it a {\em minimal defining polynomial} of $\cC$,
and it is simply a reduced (i.e., without multiple irreducible factors)
polynomial such that the real affine hypersurface
\begin{equation} \label{eq:real_affine_hyper}
\cV_p(\RR) = \{x \in \RR^d \colon p(x) = 0\}
\end{equation} 
equals the Zariski closure
$\overline{\partial \cC}^{\text{Zar}}$ of the boundary $\partial \cC$ in $\RR^d$
(normalized to be positive at an interior point of $\cC$).
Any other defining polynomial $q$ of $\cC$
is given by $q=ph$ where $h$ is an arbitrary polynomial which
is strictly positive on a dense connected subset of ${\mathcal C}$.
An algebraic interior is a semialgebraic set (i.e., a set defined by a finite boolean combination
of polynomial inequalities) since it is the closure of a connected component of a semialgebraic set.

Let now $\cC$ be a convex set in $\RR^d$ that admits a LMI representation \eqref{eq:lmirep}.
We will assume that $\Int\cC \neq \emptyset$; it turns out that by restricting the LMI representation
(i.e., the matrices $A_0,A_1,\ldots,A_d$) to a subspace of $\RR^n$,
one can assume without loss of generality that 
$A_0 + x_1 A_1 + \cdots + x_d A_d > 0$ for one and then every point of $\Int\cC$
($Y > 0$
means that $Y \in \sym\mat{\RR}{n}$ is positive definite, i.e.,  $Y$ has strictly positive eigenvalues or 
equivalently satisfies
$y^\trans Y y > 0$ for all $y \in \RR^n$, $y \neq 0$).
It is then easy to see that $\cC$ is an algebraic interior with defining polynomial
$\det(A_0 + x_1 A_1 + \cdots + x_d A_d)$.
Conversely, if $\cC$ is an algebraic interior with defining polynomial
$\det(A_0 + x_1 A_1 + \cdots + x_d A_d)$, and $A_0 + x_1 A_1 + \cdots + x_d A_d > 0$
for one point of $\Int \cC$, then it follows easily that \eqref{eq:lmirep} is a LMI representation
of $\cC$. (See \cite[Section 2.3]{HV07} for details.)

Let $q(x) = \det(A_0 + x_1 A_1 + \cdots + x_d A_d)$, let $x^0=(x^0_1,\ldots,x^0_d) \in \Int\cC$,
and let us normalize the LMI representation by $A_0 + x^0_1 A_1 + \cdots + x^0_d A_d = I$.
We restrict the polynomial $q$ to a straight line through $x^0$, i.e., for any $x \in \RR^d$ we consider
the univariate polynomial $q_x(t) = q(x^0 + tx)$. Because of our normalization, we can write
$$
q_x(t) = \det(I + t(x_1 A_1 + \cdots + x_d A_d)),
$$
and since all the eigenvalues of the real symmetric matrix $x_1 A_1 + \cdots + x_d A_d$ are real,
we conclude that $q_x \in \RR[t]$ has only real zeroes.

A polynomial $p \in \RR[x_1,\ldots,x_d]$ is said to satisfy 
the {\em real zero} ($RZ$) condition with respect to $x^0 \in \RR^d$,
or to be a {\em $RZ_{x^0}$ polynomial},
if for all $x \in \RR^d$ the univariate polynomial $p_x(t) = p(x^0 + tx)$ has only real zeroes. 
It is clear that a divisor of a $RZ_{x^0}$ polynomial is again a $RZ_{x^0}$ polynomial.
We have thus arrived at the following result of \cite{HV07}.

\begin{thm} \label{thm:nc}
If a convex set $\cC$ with $x^0 \in \Int \cC$ admits
a LMI representation, then $\cC$ is an algebraic interior whose minimal defining polynomial $p$
is a $RZ_{x^0}$ polynomial. \eqref{eq:lmirep} is a LMI representation of $\cC$
(that is positive definite on $\Int \cC$) if and only if $A_0 + x^0_1 A_1 + \cdots + x^0_d A_d > 0$ and
$$
\det(A_0 + x_1 A_1 + \cdots + x_d A_d) = p(x) h(x),
$$
where $h \in \RR[x_1,\ldots,x_d]$ satisfies $h>0$ on $\Int \cC$.
\end{thm}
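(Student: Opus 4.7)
The plan is to follow the argument sketched in the paragraphs immediately preceding the theorem and to spell out the implicit reduction and convergence steps.

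First I would carry out the standard preliminary reduction. Given any LMI representation with $\Int\cC\neq\emptyset$, one restricts the matrices $A_i$ to the orthogonal complement of the common kernel $V=\bigcap_{y\in\Int\cC}\ker(A_0+\sum_j y_j A_j)\subseteq\RR^n$; the pencil leaves $V$ invariant and vanishes on it, so the restricted pencil defines the same feasibility set and satisfies $A_0+x_1^0 A_1+\cdots+x_d^0 A_d>0$ for every $x^0\in\Int\cC$ (see \cite[Section 2.3]{HV07}). After conjugating by the positive definite square root, one may further normalize so that $A_0+x_1^0 A_1+\cdots+x_d^0 A_d=I$.

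Next comes the algebraic heart of the argument. The polynomial $q(x)=\det(A_0+x_1 A_1+\cdots+x_d A_d)$ is strictly positive at $x^0$ and vanishes exactly where the pencil becomes singular, which under the above normalization coincides with $\partial\cC$; hence $q$ is a defining polynomial of the algebraic interior $\cC$. For the $RZ$ property, fix a direction $x\in\RR^d$ and compute
$$
q_x(t)=q(x^0+tx)=\det\bigl(I+t(x_1 A_1+\cdots+x_d A_d)\bigr),
$$
so the roots of $q_x$ are the negatives of the reciprocals of the nonzero eigenvalues of the real symmetric matrix $M_x=x_1 A_1+\cdots+x_d A_d$; since a real symmetric matrix has only real eigenvalues, $q_x$ has only real zeroes. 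Thus $q$ is a $RZ_{x^0}$ polynomial, and writing $q=p h_0$ with $p$ minimal, the divisibility remark recorded just before the theorem gives that $p$ too is $RZ_{x^0}$.

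For the biconditional second part, the ``only if'' direction falls out of the above: $q>0$ and $p>0$ on $\Int\cC$, so $h=q/p$ is strictly positive there. For ``if'', I would run the eigenvalue-continuity argument in reverse. Given $A(x^0)>0$ and $\det A(x)=p(x)h(x)$ with $h>0$ on $\Int\cC$, the determinant is strictly positive on the path-connected set $\Int\cC$, so by continuity of eigenvalues each eigenvalue of $A(x)$ keeps constant sign along any path in $\Int\cC$ starting at $x^0$; starting positive at $x^0$, all eigenvalues remain positive, hence $A(x)>0$ on $\Int\cC$ and $A(x)\geq 0$ on $\cC$ by closure. For the reverse inclusion, the LMI feasibility set is a closed convex set whose interior contains $\Int\cC$ and whose boundary lies in $\{q=0\}=\{p h=0\}$; since $h>0$ on $\Int\cC$ the boundary near $\Int\cC$ is cut out by $p$, and so the LMI set cannot extend beyond $\cC$, which equals the closure of the connected component of $\{p>0\}$ through $x^0$.

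The step I expect to require the most care is this last one — verifying cleanly that the LMI feasibility set actually equals $\cC$ rather than merely contains it — because it relies on the fact that once an eigenvalue of $A(x)$ vanishes along a line exiting $\cC$, the matrix genuinely leaves the positive semidefinite cone, and this in turn uses the minimality of $p$ together with the factorization $q=ph$ in an essential way. Everything else is a direct and essentially formal consequence of the real symmetry of the pencil and the spectral theorem.
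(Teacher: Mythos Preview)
Your proposal is correct and follows essentially the same approach as the paper, which presents the argument in the paragraphs preceding the theorem and defers the details to \cite[Section~2.3]{HV07}. The only slip is the assertion that the zero set of $q$ \emph{coincides} with $\partial\cC$; it merely contains it, and what you actually need (and in effect establish via the eigenvalue-sign continuity argument) is that $\Int\cC$ equals the connected component of $x^0$ in $\{q>0\}$.
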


\subsection{}

The definition of a $RZ_{x^0}$ polynomial has a simple geometric meaning (\cite[Section 3]{HV07}).
Assume for simplicity that $p$ is reduced (i.e., without multiple irreducible factors) of degree $m$.
Then $p$ is a $RZ_{x^0}$ polynomial if and only if a general straight line through $x^0$ in $\RR^d$ intersects
the corresponding real affine hypersurface $\cV_p(\RR)$ (see \eqref{eq:real_affine_hyper})
in $m$ distinct points.
Alternatively, every straight line through $x^0$ in the real projective space $\PP^d(\RR)$
intersects the projective closure $\cV_P(\RR)$ of $\cV_p(\RR)$,
\begin{equation} \label{eq:real_proj_hyper}
\cV_P(\RR) = \{[X] \in \RR^d \colon P(X) = 0\},
\end{equation} 
in exactly $m$ points counting multiplicities.
Here we identify as usual the $d$ dimensional real projective
space $\PP^d(\RR)$ with
the union of $\RR^d$ and of the hyperplane
at infinity $X_0=0$,
so that the affine coordinates $x=(x_1,\ldots,x_d)$ and the
projective coordinates $X=(X_0,X_1,\ldots,X_d)$ are related by
$x_1=X_1/X_0$, \dots, $x_d=X_d/X_0$;
we denote by $[X] \in \PP^d(\RR)$ the point with the projective coordinates $X$;
and we let $P \in \RR[X_0,X_1,\ldots,X_d]$ be the homogenization of $p$,
\begin{equation} \label{eq:project}
P(X_0,X_1,\ldots,X_d) = X_0^m p(X_1/X_0,\ldots,X_d/X_0).
\end{equation} 
Notice that if $X=(1,x)$ and $X^0=(1,x^0)$,
\begin{equation} \label{eq:projvsaffine_onaline}
P(X + sX^0) = (s+1)^m p(x_0 + (s+1)^{-1}(x-x^0)).
\end{equation} 

It turns out that if $p$ is a $RZ_{x^0}$ polynomial with $p(x^0)>0$,
and if $x'$ belongs to the interior of the closure of the connected component of $x^0$ in 
$\{x \in {\mathbb R}^d \colon p(x) > 0\}$,
then $p(x')>0$ and $p$ is also a $RZ_{x'}$ polynomial (\cite[Section 5.3]{HV07}). 
We call an algebraic interior $\cC$ whose minimal defining polynomial
satisfies the $RZ$ condition with respect to one and then every point of $\Int \cC$
a {\em rigidly convex} algebraic interior. 

As simple examples, we see that the circle 
$\{(x_1,x_2) \colon x_1^2 + x_2^2 \leq 1\}$ is a rigidly convex algebraic interior,
while the ``flat TV screen''
$\{(x_1,x_2) \colon x_1^4 + x_2^4 \leq 1\}$ is not.
{\em Theorem \ref{thm:nc} tells us that a necessary condition for $\cC$ to admit a LMI representation
is that $\cC$ is a rigidly convex algebraic interior, and the size $n$ of the matrices in a LMI representation
is greater than or equal to the degree $m$ of a minimal defining polynomial $p$ of $\cC$.}

Rigidly convex algebraic interiors are always convex sets (\cite[Section 5.3]{HV07}).
They are also basic closed semialgebraic sets, as follows (\cite[Remark 2.6]{NPS10} following \cite{Re06}).
Let $p$ be a minimal defining polynomial of a rigidly convex algebraic interior $\cC$, of degree $m$, 
and let $x^0 \in \Int \cC$.
We set
\begin{equation} \label{eq:renegar_der}
P^{(k)}_{x^0}(X) = \frac{d^k}{ds^k} \left. P(X + s X^0) \right|_{s=0},\quad
p^{(k)}_{x^0}(x) = P^{(k)}_{x^0}(1,x_1,\ldots,x_d),
\end{equation}
where $P$ is the homogenization of $p$ (see \eqref{eq:project}) and $X^0=(1,x^0)$;
$p^{(k)}_{x^0}$ is called the {\em $k$th Renegar derivative} of $p$ with respect to $x^0$.
Then $p^{(k)}_{x^0}$ is a $RZ_{x^0}$ polynomial with $p^{(k)}_{x^0}(x^0)>0$ for all $k=1,\ldots,m-1$.
The rigidly convex algebraic interiors $\cC^{(k)}$ containing $x^0$ with minimal defining polynomials $p^{(k)}_{x^0}$
(i.e., the closures of the connected components of $x^0$ in 
$\{x \in {\mathbb R}^d \colon p^{(k)}_{x^0}(x) > 0\}$) are increasing:
$\cC = \cC^{(0)} \subseteq \cC^{(1)} \subseteq \cdots \subseteq \cC^{(m-1)}$,
and
\begin{equation} \label{eq:basic_descr}
\cC = \{x \in \RR^d \colon p(x) \geq 0, p^{(1)}_{x^0}(x) \geq 0, \ldots, p^{(m-1)}_{x^0}(x) \geq 0\}.
\end{equation}

$RZ$ polynomials can be also characterized by a very simple global topology of the corresponding real projective
hypersurface $\cV_P(\RR)$ (see \eqref{eq:real_proj_hyper}; readers who prefer can assume
that the corresponding real affine hypersurface $\cV_p(\RR)$ is compact in $\RR^d$ --- this implies that the degree $m$
of $p$ is even --- and replace in the following the real projective space $\PP^d(\RR)$ by the affine space $\RR^d$).
We call $W \subseteq \PP^d(\RR)$ an {\em ovaloid} if
$W$ is isotopic in $\PP^d(\RR)$
to a sphere $S \subset {\mathbb R}^m \subset {\mathbb P}^m({\mathbb R})$,
i.e., there is a homeomorphism $F$ of $\PP^d(\RR)$
with $F(S) = W$, and furthermore $F$ is homotopic to
the identity, i.e., there is a homeomorphism $H$ of
$[0,1] \times \PP^d(\RR)$ such that
$H_t=H|_{\{t\} \times \PP^d(\RR)}$ is a homeomorphism
of $\PP^d(\RR)$ for every $t$,
$H_0 = \operatorname{Id}_{\PP^d(\RR)}$,
and $H_1 = F$.
Notice that $\PP^d(\RR) \setminus S$ consists of two 
connected components
only one of which is contractible, hence the same is true
of $\PP^d(\RR) \setminus W$; we call the contractible component
the {\em interior} of the ovaloid $W$, and the non-contractible 
component the {\em exterior}. We call $W \subseteq \PP^d(\RR)$ a {\em pseudo-hyperplane}
if $W$ is isotopic in $\PP^d(\RR)$ to a (projective) hyperplane
$H \subseteq \PP^d(\RR)$. 
In the case $d=2$ we say {\em oval} and {\em pseudo-line} instead of ovaloid and pseudo-hyperplane.
We then have the following result; we refer to \cite[Sections 5 and 7]{HV07} for proof, discussion, and implications.

\begin{prop}
\label{prop:oval}
Let $p \in \RR[x_1,\ldots,x_d]$ be reduced of degree $m$ and assume that
the corresponding real projective hypersurface $\cV_P(\RR)$ is smooth.
Then $p$ satisfies $RZ_{x^0}$ with $p(x^0) \neq 0$ if and only if
\begin{itemize}
\item[a.]
if $m=2k$ is even, $\cV_P(\RR)$ is a disjoint union of $k$ ovaloids $W_1,\ldots,W_k$,
with $W_i$ contained in the interior of $W_{i+1}$, $i=1,\ldots,k-1$,
and $x^0$ lying in the interior of $W_1$;
\item[b.]
if $m=2k+1$ is odd, $\cV_P(\RR)$ is a disjoint union of $k$ ovaloids $W_1,\ldots,W_k$,
with $W_i$ contained in the interior of $W_{i+1}$, $i=1,\ldots,k-1$,
and $x^0$ lying in the interior of $W_1$, and a pseudo-hyperplane $W_{k+1}$
contained in the exterior of $W_k$.
\end{itemize}
\end{prop}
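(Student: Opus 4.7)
The plan is to recast the $RZ_{x^0}$ condition projectively and then exploit the geometry of the pencil of real lines through $[X^0]=[1,x^0]$. Combining \eqref{eq:projvsaffine_onaline} with B\'ezout's theorem, the condition ``$p$ is $RZ_{x^0}$ with $p(x^0)\neq 0$'' is equivalent to $[X^0]\notin\cV_P(\RR)$ together with the statement that every real projective line $L$ through $[X^0]$ meets $\cV_P(\RR)$ in exactly $m$ points counted with multiplicity. Each such $L$ is a circle $\PP^1(\RR)$ on which $[X^0]$ is distinguished, so the $m$ intersection points lie on the affine piece $L\setminus\{[X^0]\}\cong\RR$ in a natural linear order $t_1<\cdots<t_m$, canonical up to reversal.

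For the ``only if'' direction, I would analyze the projection $\pi\colon \cV_P(\RR)\to \PP^{d-1}(\RR)$ centered at $[X^0]$. Although the ordering $t_1<\cdots<t_m$ is only defined up to reversal, the pairs $\{t_j,t_{m+1-j}\}$ and (for odd $m$) the middle root $t_{(m+1)/2}$ are intrinsic. Smoothness of $\cV_P(\RR)$ together with the constant total count $m$ let these vary continuously as $L$ sweeps $\PP^{d-1}(\RR)$, even across the tangent lines where consecutive $t_j$'s coalesce. Define $W_i$ ($1\le i\le k$) as the locus swept out by the pair at nesting level $i$, with $W_1$ the innermost pair and $W_k$ the outermost, and, for odd $m=2k+1$, let $W_{k+1}$ be the locus of the middle root. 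Then $\pi|_{W_i}$ is a branched double cover of $\PP^{d-1}(\RR)$ whose two sheets are exchanged by orientation reversal of $L$, so the cover is connected and nontrivial, forcing its total space to be $S^{d-1}$; ambient isotopy then exhibits $W_i$ as an ovaloid. For odd $m$, $\pi|_{W_{k+1}}$ is a homeomorphism, so $W_{k+1}$ is a pseudo-hyperplane. The nesting $W_i\subset\Int W_{i+1}$ and $x^0\in\Int W_1$ are immediate from the nesting of the intervals cut out on each $L$ by the pairs.

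For the ``if'' direction, I would use mod-2 intersection theory in $\PP^d(\RR)$. Each ovaloid $W_i$ bounds its contractible interior and is therefore null-homologous, so its mod-2 intersection number with any line is zero; since $x^0\in\Int W_i$, however, any real line $L$ through $x^0$ must exit $W_i$, and transversality forces at least two real intersection points with $W_i$. A pseudo-hyperplane is homologous to a projective hyperplane, so its mod-2 intersection with $L$ equals one, contributing at least one further real intersection. Summing gives at least $2k$ real intersections for even $m=2k$ and at least $2k+1$ for odd $m=2k+1$; comparison with the B\'ezout bound $m$ forces equality and all intersection points to be real, which is exactly $RZ_{x^0}$, while $p(x^0)\neq 0$ follows from $x^0\in\Int W_1\subset \PP^d(\RR)\setminus\cV_P(\RR)$.

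The main obstacle lies in the ``only if'' half. Showing that the pair map $L\mapsto\{t_j,t_{m+1-j}\}$ extends across the branch locus of $\pi$ without shuffling unrelated sheets, and really does cut out smooth closed submanifolds of $\cV_P(\RR)$, requires careful local analysis; the delicate point is ruling out an exchange at some tangent line that would merge two would-be ovaloids into a single non-spherical component. Smoothness of $\cV_P(\RR)$ and the invariance of the total count $m$ are the essential ingredients, but one still needs ambient-isotopy arguments to upgrade ``$W_i$ is homeomorphic to $S^{d-1}$'' to ``$W_i$ is an ovaloid'' in the paper's sense. I would refer to \cite[Sections 5 and 7]{HV07} for the full topological bookkeeping.
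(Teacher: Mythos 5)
The paper does not prove this proposition; it refers the reader to \cite[Sections 5 and 7]{HV07} for the argument, so there is no in-paper proof to compare your proposal against. Your outline --- project from $[X^0]$, read the nesting off the fibers, close the loop with a degree count --- is the natural topological approach and is close in spirit to the one used in \cite{HV07}. But the gap you flag as the ``main obstacle'' should be dispelled rather than worked around, and as stated it misdiagnoses where the work lies.

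In the ``only if'' direction, under the hypotheses ($p$ is $RZ_{x^0}$, $\cV_P(\RR)$ smooth, $p(x^0)\neq 0$), no real line through $[X^0]$ can be tangent to $\cV_P(\RR)$, so there is no branch locus to cross. To see this, suppose $L_0$ meets the smooth hypersurface at $[Y]$ with intersection multiplicity $\mu\geq 2$; choose real local coordinates at $[Y]$ in which $\cV_P(\RR)$ is the graph $z_d=g(z_1,\dots,z_{d-1})$ with $dg_{[Y]}=0$ and $L_0$ is the $z_1$-axis. Perturbing $L_0$ to a nearby line $L_v$ through $[X^0]$ with normal component $v_d$, the intersection equation near $[Y]$ reduces to a one-variable model $v_d=a\epsilon^{\mu}+O(\epsilon^{\mu+1})$ with $a\neq 0$. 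For at least one sign of small $v_d\neq 0$ this has strictly fewer than $\mu$ real solutions near $\epsilon=0$ (none when $\mu$ is even and $v_d/a<0$, exactly one when $\mu$ is odd), so at least two of the $m$ real roots of $P(X+sX^0)$ on $L_0$ become a nonreal conjugate pair on $L_v$, contradicting $RZ_{x^0}$. Hence $\pi$ is an honest, unramified $m$-sheeted cover of $\PP^{d-1}(\RR)$, the pairs $\{t_j,t_{m+1-j}\}$ vary smoothly and remain disjoint, and the ``careful local analysis'' at tangent lines you defer is unnecessary. What \emph{does} still need to be supplied is the passage from ``$W_i$ is diffeomorphic to $S^{d-1}$'' (nontrivial connected double cover of $\PP^{d-1}(\RR)$) to ``$W_i$ is an ovaloid in $\PP^d(\RR)$''; here the cone structure on $\Int W_1$ given by the segments from $[X^0]$, and the line-segment foliation of the regions between consecutive $W_i$'s, furnish the required ambient isotopy, and this deserves to be spelled out rather than waved at.

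In the ``if'' direction the mod-$2$ count is correct in spirit, but you should apply it to a \emph{generic} line $L$ through $x^0$, where transversality is available; there the count yields $\geq 2$ real simple roots per ovaloid enclosing $x^0$ and $\geq 1$ per pseudo-hyperplane, hence exactly $m$ real simple roots, and one passes to all lines by closedness of the real-rootedness locus. As written, ``transversality forces at least two real intersection points'' presupposes what has to be arranged.
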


Let us denote by $\cI$ the interior of $W_1$, let us normalize $p$ by $p(x^0)>0$,
and let $H_\infty = \{X_0 = 0\}$ be the hyperplane at infinity in $\PP^d(\RR)$.
If $\cI \cap H_\infty = \emptyset$, then the closure of $\cI$ in $\RR^d$ is a rigidly convex algebraic
interior with a minimal defining polynomial $p$.
If $\cI \cap H_\infty \neq \emptyset$, then $\cI \setminus \cI \cap H_\infty$ consists of two connected
components, the closure of each one of them in $\RR^d$ being a rigidly convex algebraic
interior with a minimal defining polynomial $p$ (if $m$ is even) or 
$p$ for one component and $-p$ for the other component
(if $m$ is odd).

\section{Determinantal representations of polynomials: some of the known and of the unknown}
\label{sec:resultsdetrep}

\subsection{}

The following is proved in \cite[Section 5]{HV07} 
(based on the results of \cite{Vin93} and \cite{BV99}, see also \cite{Dub83}).

\begin{thm} \label{thm:2d}
Let $p \in \RR[x_1,x_2]$ be a $RZ_{x^0}$ polynomial of degree $m$ with $p(x^0)=1$.
Then there exist $A_0,A_1,A_2 \in \sym\mat{\RR}{m}$ with $A_0+x^0_1A_1+x^0_2A_2 = I$ such that
\begin{equation} \label{eq:2d_affine_detrep}
\det(A_0+x_1A_1+x_2A_2) = p(x).
\end{equation}
\end{thm}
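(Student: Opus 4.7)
The plan is to build the matrices $A_0, A_1, A_2$ by algebraic geometry on the projective plane curve cut out by the homogenization of $p$. Let $P$ be the homogenization of $p$ and $M(X) = X_0 A_0 + X_1 A_1 + X_2 A_2$ with $X^0=(1,x^0_1,x^0_2)$; the task is to construct a real symmetric $m \times m$ matrix pencil $M$ with $M(X^0)=I$ and $\det M = P$, since setting $X_0=1$ then yields the desired affine representation. A density/deformation argument first reduces the problem to the case where the complex projective curve $C=\cV_P \subset \PP^2(\CC)$ is smooth and irreducible; Proposition \ref{prop:oval} then describes $C(\RR)$ as nested ovals (plus a pseudo-line in odd degree) with $x^0$ in the interior of the innermost oval.

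The key algebro-geometric fact is that symmetric determinantal representations of $P$ are governed by theta characteristics on $C$: the cokernel sheaf $\cL = \coker M$ is a line bundle with $\cL^{\otimes 2}$ isomorphic to the canonical bundle $K_C \cong \cO_C(m-3)$, and the adjoint-matrix construction of Section \ref{sec:sheaves} (going back to \cite{D1900} and refined in \cite{Vin89,BV96,KV1}) reconstructs a symmetric pencil $M$ from any non-effective theta characteristic $\cL$. What remains is to ensure reality of $\cL$ (so $M$ has real entries) and positive definiteness of $M(X^0)$ (after which, conjugating by $M(X^0)^{-1/2}$ normalizes $M(X^0)$ to $I$). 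Following the route of Section \ref{sec:interlace}, I would exhibit such an $\cL$ via an interlacing $RZ_{x^0}$ polynomial $q$ of degree $m-1$ --- a real polynomial whose ovals lie, one by one, strictly between consecutive ovals of $p$ (with an extra pseudo-line where degree parity requires). The residual scheme $\cV_P \cap \cV_Q$ then cuts out a real divisor class on $C$ defining a theta characteristic, and the interlacing is precisely what prevents any global section.

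The main obstacle is verifying positive definiteness of $M(X^0)$. Non-effectivity and reality of $\cL$ only produce a real symmetric $M$ with $\det M = P$; nothing yet forces the signature of $M(X^0)$ to be $(+,\ldots,+)$ rather than indefinite. Here I expect the placement of $x^0$ in the innermost oval, combined with the interlacing, to pin down the signature through a continuity argument --- deforming the interlacing polynomial to a degenerate baseline (for instance, a product of real lines through $x^0$) for which a positive definite $M(X^0)$ can be written down explicitly, and tracking the eigenvalues through the deformation. Making this sign-tracking rigorous, thereby avoiding the theta-function machinery used in \cite{HV07}, is the heart of the constructive proof promised in Section \ref{sec:interlace}.
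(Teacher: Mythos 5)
Your proposal conflates two approaches that the paper keeps carefully distinct, and the place it breaks is the assertion that the interlacing polynomial $Q$ delivers a \emph{theta characteristic}. Interlacing controls the contact of $\cV_Q$ with $\cV_P$ only at \emph{real} intersection points; as spelled out in Section \ref{subsec:divisors} and in the proof of Theorem \ref{thm:new_2d}, this gives a decomposition $(\nu^*Q)=D+D^\tau+\Delta$ with $D_\ell=D^\tau$, which is exactly the condition for a \emph{self-adjoint} (complex hermitian) determinantal representation. A \emph{real symmetric} one requires $D=D_\ell$, i.e.\ even contact of $\cV_Q$ with $\cV_P$ at \emph{all} intersection points including the complex-conjugate pairs, and that is not implied by interlacing. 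The paper's actual proof of Theorem \ref{thm:2d} therefore does not go through interlacing at all: it is the transcendental route of Section \ref{subsec:theta} (following \cite{HV07,Vin93,BV99}), in which one selects a real theta characteristic $\cE(-1)$ lying on the distinguished real $g$-torus $T_0$ in the Jacobian of $\cV_P$, and positivity is built into the explicit theta-function formulae for the coefficient matrices. Section \ref{sec:interlace} is offered only as an \emph{alternate, weaker} proof --- producing a self-adjoint rather than real symmetric pencil --- exactly as the paper warns right after stating Theorem \ref{thm:2d}.

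A secondary issue is your treatment of positivity of $M(X^0)$. You propose a continuity argument, deforming the interlacer to a degenerate baseline and tracking eigenvalues; to make this work you would have to guarantee that the determinantal representation stays nondegenerate (and that $M(X^0)$ stays invertible) along the whole path, which is not automatic. The paper proves definiteness directly and algebraically: Lemma \ref{lem:pairing} yields $G_i\,U(X^0)\,F_j=V_{ij}\,P'_{X^0}$ on $\cV_P$, Lemma \ref{lem:intersect} reduces definiteness of $U(X^0)$ to sign conditions at the $m$ real intersection points of a generic line through $[X^0]$ with $\cV_P(\RR)$, and Rolle's theorem translates those sign conditions into precisely the interlacing hypothesis (Theorem \ref{thm:interlace}). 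This bypasses any deformation entirely.
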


We will review the proof of Theorem \ref{thm:2d} given in \cite{HV07} in Section \ref{sec:sheaves} below,
and then present in Section \ref{sec:interlace} an alternate proof for positive self-adjoint (rather than real
symmetric) determinantal representations that avoids
the transcendental machinery of Jacobian varieties and theta functions
(though it still involves, to a certain extent, meromorphic differentials on a compact Riemann surface).

{\em Theorem \ref{thm:2d} tells us that a necessary and sufficient condition for $\cC \subseteq \RR^2$ 
to admit a LMI representation
is that $\cC$ is a rigidly convex algebraic interior, and the size of the matrices in a LMI representation
can be taken equal to be the degree $m$ of a minimal defining polynomial $p$ of $\cC$.}

There can be no exact analogue of Theorem \ref{thm:2d} for $d>2$. 
Indeed, we have

\begin{prop} \label{prop:gen} 
A general polynomial $p \in \CC[x_1,\ldots,x_d]$ of degree $m$
does not admit a determinantal representation 
\begin{equation} \label{eq:affine_detrep_no_h}
\det(A_0+x_1A_1+\cdots+x_dA_d) = p(x),
\end{equation}
with $A_0,A_1,\ldots,A_d \in \mat{\CC}{m}$, for $d>3$ and for $d=3$, $m \geq 4$.
\end{prop}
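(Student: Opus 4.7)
The natural approach is a dimension count. Consider the morphism of affine varieties
\begin{equation*}
\phi \colon \bigl(\mat{\CC}{m}\bigr)^{d+1} \longrightarrow \CC[x_1,\ldots,x_d]_{\leq m},
\quad \phi(A_0,\ldots,A_d) = \det(A_0 + x_1A_1 + \cdots + x_dA_d),
\end{equation*}
whose image is exactly the locus of representable polynomials. The domain has dimension $(d+1)m^2$ and the target has dimension $\binom{m+d}{d}$. The plan is to establish a generic lower bound of $2m^2-2$ on the fibers of $\phi$, which by the fiber dimension theorem forces $\dim \operatorname{image}(\phi) \leq (d-1)m^2 + 2$, and then to verify by arithmetic that $(d-1)m^2 + 2 < \binom{m+d}{d}$ in the claimed range. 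Since $\operatorname{image}(\phi)$ is constructible by Chevalley's theorem, this confines representable polynomials to a proper closed subvariety of $\CC[x_1,\ldots,x_d]_{\leq m}$, and a general polynomial lies outside it.

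The fiber bound arises from a group action. The algebraic group
\begin{equation*}
G = \bigl\{(P,Q) \in GL_m(\CC) \times GL_m(\CC) \colon \det(P)\det(Q) = 1\bigr\},
\end{equation*}
of dimension $2m^2 - 1$, acts on the domain by $(P,Q)\cdot(A_i) = (PA_iQ)$ and $\phi$ is $G$-invariant. The key substep is that a generic tuple has stabilizer the $1$-parameter subgroup $\{(\lambda I,\lambda^{-1}I) : \lambda \in \CC^*\}$. Indeed, for generic $(A_0,\ldots,A_d)$ the matrix $A_0$ is invertible, so any stabilizing pair must satisfy $Q = A_0^{-1}P^{-1}A_0$, and the remaining equations $PA_iQ = A_i$ convert to commutation relations $P(A_iA_0^{-1}) = (A_iA_0^{-1})P$ for $i = 1,\ldots,d$. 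Since $d \geq 3 \geq 2$, a generic choice of $A_1A_0^{-1},\ldots,A_dA_0^{-1}$ acts irreducibly on $\CC^m$, so by Burnside's theorem their common centralizer in $\mat{\CC}{m}$ is $\CC\cdot I$, yielding the asserted $1$-parameter stabilizer. The generic $G$-orbit thus has dimension $2m^2 - 2$ and lies inside a fiber of $\phi$, giving $\dim \operatorname{image}(\phi) \leq (d+1)m^2 - (2m^2 - 2) = (d-1)m^2 + 2$.

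It remains to check the inequality $(d-1)m^2 + 2 < \binom{m+d}{d}$. For $d = 3$ the left side is $2m^2 + 2$, quadratic in $m$, while $\binom{m+3}{3} = (m+1)(m+2)(m+3)/6$ is cubic; one has $34 < 35$ at $m=4$, $52 < 56$ at $m=5$, and the gap widens thereafter. For $d \geq 4$ with $m \geq 2$ (the case $m=1$ being trivial) the left side is linear in $d$ while $\binom{m+d}{d}$ grows polynomially of degree $m \geq 2$ in $d$; after checking the base case $(d,m) = (4,2)$, where $14 < 15$, the inequality follows from the leading term comparison. I expect the main obstacle to be the stabilizer computation: one needs the fact that a generic $d$-tuple ($d \geq 2$) of $m \times m$ complex matrices has trivial common centralizer, which is standard invariant theory but requires Burnside-style irreducibility input and should be stated carefully, including the genericity of $A_0$ being invertible and of the tuple acting irreducibly.
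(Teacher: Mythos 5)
Your argument is precisely the ``simple count of parameters'' that the paper attributes to Dickson (cited as \cite{Dic21}): bound the image of the determinant map by quotienting out the $GL_m\times GL_m$ equivalence, and compare with the dimension of the space of polynomials. Your stabilizer computation (generic $A_0$ invertible, generic $A_iA_0^{-1}$ generating $M_m(\CC)$, hence a one-dimensional generic stabilizer and orbit dimension $2m^2-2$) and the arithmetic verification of $(d-1)m^2+2<\binom{m+d}{d}$ on the stated range are correct, so this is a valid completion of the argument the paper cites, under the implicit assumption $m\geq 2$ (for $m=1$ both the inequality and the proposition fail, as every linear form is a $1\times 1$ determinant).
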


Since for any fixed $x^0 \in \RR^d$ the set of $RZ_{x^0}$ polynomials of degree $m$ with $p(x^0)>0$
such that the corresponding real projective hypersurface $\cV_P(\RR)$ is smooth
is an open subset of the vector space of polynomials over $\RR$ of degree $m$
(see \cite[Sections 5 and 7]{HV07} following \cite{Nui68}),
it follows that a general $RZ_{x^0}$ polynomial $p \in \RR[x_1,\ldots,x_d]$ of degree $m$ with $p(x^0)>0$
does not admit a determinantal representation \eqref{eq:affine_detrep_no_h} with $m \times m$ matrices ---
even without requiring
real symmetry or positivity --- for $d>3$ and for $d=3$, $m \geq 4$.
(For the remaining cases when $d=3$, the case $m=2$ is straightforward and the case
$m=3$ is treated in details in \cite{BK07} when
the corresponding complex projective cubic surface $\cV_P$ in $\PP^3(\CC)$ is smooth; 
in both cases there are no positive real symmetric determinantal representations of size $m$
as in Theorem \ref{thm:2d}, but there are positive self-adjoint determinantal representations of size $m$,
i.e., representations \eqref{eq:affine_detrep_no_h} with $m \times m$ self-adjoint matrices such that
$A_0 + x^0_1A_1 + x^0_2A_2 + x^0_3A_3 = I$.)

Proposition \ref{prop:gen} follows by a simple count of parameters, see \cite{Dic21}.
It also follows from Theorem \ref{thm:kv} below using the Noether--Lefschetz theory
\cite{Lef24,Har70,GH85}, since for a general homogeneous polynomial $P \in \CC[X_0,X_1,\ldots,X_d]$ of degree $m$
with $d>3$ or with $d=3$, $m \geq 4$, the only line bundles on $\cV_P$ are of the form $\cO_{\cV_P}(j)$
and these obviously fail the conditions of the theorem.

The following is therefore the ``best possible'' generalization of Theorem \ref{thm:2d} to the case $d>2$.

\begin{conj} \label{conj:gend}
Let $p \in \RR[x_1,\ldots,x_d]$ be a $RZ_{x^0}$ polynomial of degree $m$ with $p(x^0)=1$.
Then there exists a $RZ_{x^0}$ polynomial $h \in \RR[x_1,\ldots,x_d]$ of degree $\ell$ with $h(x^0)=1$
and with the closure of the connected component of $x^0$ in 
$\{x \in {\mathbb R}^d \colon h(x) > 0\}$ containing the closure of the connected component of $x^0$ in 
$\{x \in {\mathbb R}^d \colon p(x) > 0\}$, 
and $A_0,A_1,\ldots,A_d \in \sym\mat{\RR}{n}$, $n \geq m+\ell$, 
with $A_0+x^0_1A_1+\cdots+x^0_dA_d = I$, such that
\begin{equation} \label{eq:affine_detrep}
\det(A_0+x_1A_1+\cdots+x_dA_d) = p(x) h(x).
\end{equation}
\end{conj}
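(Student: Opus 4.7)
The plan is to leverage Theorem \ref{thm:2d} by restricting $p$ to two-planes through $x^0$ and then assembling the resulting family of bivariate representations into a single $d$-variate one. For any two-plane $L = x^0 + \operatorname{span}(v_1, v_2)$, the restriction $p(x^0 + s_1 v_1 + s_2 v_2)$ is a real zero polynomial at the origin of degree $m$ in $(s_1, s_2)$, so Theorem \ref{thm:2d} furnishes symmetric matrices $B_0^L, B_1^L, B_2^L \in \sym\mat{\RR}{m}$ with $B_0^L = I$ and
$$\det(B_0^L + s_1 B_1^L + s_2 B_2^L) = p(x^0 + s_1 v_1 + s_2 v_2).$$
The goal is to produce a single $n \times n$ symmetric linear pencil $A(x)$ in $d$ variables with $A(x^0) = I$ and $\det A(x) = p(x) h(x)$, where $h$ is positive on the connected component of $x^0$, so that the restriction of $A$ to each two-plane through $x^0$ agrees, up to a positive scalar multiplier, with some bivariate pencil of the above form.

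A second, more intrinsic approach exploits the decomposition \eqref{eq:basic_descr}, which presents $\cC$ as the intersection of the rigidly convex algebraic interiors $\cC^{(k)}$ whose minimal defining polynomials are the Renegar derivatives $p^{(k)}_{x^0}$ of strictly smaller degrees. If each $\cC^{(k)}$ admits an LMI representation with denominator $h_k$, then stacking the pencils block-diagonally produces an LMI representation of $\cC$ whose denominator is
$$h = \prod_{k=1}^{m-1} p^{(k)}_{x^0} \cdot \prod_{k=0}^{m-1} h_k,$$
and this $h$ automatically satisfies the $RZ_{x^0}$ and containment requirements of the conjecture because each $p^{(k)}_{x^0}$ does. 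This recursion shrinks the degree but not the dimension, so at the bottom of the induction one still faces the conjecture for low-degree $RZ$ polynomials in $d$ variables, where by Proposition \ref{prop:gen} the multiplier $h$ is already indispensable.

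A third, algebraic route parallels Section \ref{sec:sheaves}: one searches directly on the complex projective hypersurface associated with $ph$ for a coherent sheaf with the cohomological and positivity properties required to yield a positive self-adjoint determinantal representation; here the role of $h$ is to enlarge the hypersurface with auxiliary components until a sheaf with the correct Euler characteristic exists, evading the Noether--Lefschetz obstruction that forbids such sheaves on a generic $\cV_P$ for $d > 3$. In every one of these approaches the central obstacle is the same. The real zero condition delivers a scalar positivity statement on the connected component of $x^0$, but promoting it to matrix positivity --- positivity of the line bundle and its sections in the sheaf-theoretic picture, or positive semidefiniteness of the assembled pencil throughout $\cC$ in the constructive picture --- is exactly what is missing. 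The most plausible source of the extra input is a more flexible construction of $h$, perhaps via sums of squares of hyperbolic polynomials, or via a higher-dimensional analogue of the interlacing idea that underlies the alternative proof of Theorem \ref{thm:2d} given in Section \ref{sec:interlace}.
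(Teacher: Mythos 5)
The statement labeled Conjecture~\ref{conj:gend} is exactly that --- a conjecture. The paper does not prove it; it is presented as an open problem (the ``generalized Lax conjecture'' of the Introduction), the paper records obstructions to the most obvious special cases (taking $h=1$, see Proposition~\ref{prop:genposetc}, or $h$ a power of $p$, see Theorem~\ref{thm:badvamos}), and sketches possible future directions via Theorem~\ref{thm:kv} and the interlacing machinery of Section~\ref{sec:interlace}. So there is no proof in the paper for your sketch to be compared against, and no proof can be extracted from what you wrote; you do acknowledge this, but the text should not be framed as a proof attempt.

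On the specific routes you outline, two of them have concrete problems beyond the acknowledged positivity gap. In the two-plane approach there is no coherent way to glue the bivariate pencils: each $B^L$ is only determined up to real orthogonal conjugation (and up to the discrete choice of theta characteristic underlying Theorem~\ref{thm:2d}), so there is no canonical identification of the spaces on which the pencils for different planes $L$ act, and no argument that a single $n\times n$ pencil restricting compatibly to all of them exists. In the Renegar-derivative approach there is an outright circularity: the sets $\cC^{(k)}$ are nested increasing with $\cC^{(0)}=\cC$, so $\bigcap_{k}\cC^{(k)}=\cC^{(0)}=\cC$, and the block-diagonal stacking cannot omit the $k=0$ block --- which is precisely the LMI representation of $\cC$ you are trying to construct. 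The semialgebraic description \eqref{eq:basic_descr} likewise has $p\ge 0$ itself as one of its defining inequalities, so it does not lower the degree. The third, sheaf-theoretic route is essentially a restatement of the open program the paper lays out after Theorem~\ref{thm:kv}, and you correctly identify that what is missing there is a positivity criterion on the kernel sheaf when $d>2$; that identification is the most valuable part of your write-up, but it is a diagnosis of the open problem rather than a step toward solving it.
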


Notice that is enough to require that $h$ is a polynomial that is strictly positive
on the connected component of $x^0$ in 
$\{x \in {\mathbb R}^d \colon h(x) > 0\}$, since it then follows from \eqref{eq:affine_detrep}
that $h$ is a $RZ_{x^0}$ polynomial with $h(x^0)=1$
and with the closure of the connected component of $x^0$ in 
$\{x \in {\mathbb R}^d \colon h(x) > 0\}$ containing the closure of the connected component of $x^0$ in 
$\{x \in {\mathbb R}^d \colon p(x) > 0\}$.

{\em Conjecture \ref{conj:gend} tells us that a necessary and sufficient condition for $\cC \subseteq \RR^d$ 
to admit a LMI representation
is that $\cC$ is a rigidly convex algebraic interior.}

We can also homogenize \eqref{eq:affine_detrep},
\begin{equation} \label{proj_detrep}
\det(X_0A_0+X_1A_1+\cdots+X_dA_d) = P(X) \tilde H(X),
\end{equation}
where
$\tilde H(X) = H(X) X_0^{n - m - \ell}$
and $P$ and $H$ are the homogenizations of $P$ and $H$ respectively (see \eqref{eq:project}).

\subsection{}

The easiest way to establish Conjecture \ref{conj:gend} would be to try taking $h=1$ in \eqref{eq:affine_detrep}
bringing us back to \eqref{eq:affine_detrep_no_h};
in the homogeneous version, 
$\tilde H = X_0^{n-m}$ in \eqref{proj_detrep}. This was the the form of the conjecture stated in \cite{HV07}.
It was given further credence by the existence of real symmetric determinantal representations
without the requirement of positivity.

\begin{thm} \label{thm:symdetrep}
Let $p \in \RR[x_1,\ldots,x_d]$. Then there exist 
$A_0,A_1,\ldots,A_d \in \sym\mat{\RR}{n}$ for some $n \geq m$ such that
$p$ admits the determinantal representation \eqref{eq:affine_detrep_no_h}.
\end{thm}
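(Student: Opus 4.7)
My approach is to build a real symmetric determinantal representation inductively, from simple building blocks, relying on two operations that preserve the existence of such a representation. The building blocks are monomials $x_1^{a_1}\cdots x_d^{a_d}$ of total degree $m$; each admits the trivial diagonal pencil
\[
\det\operatorname{diag}\bigl(\underbrace{x_1,\ldots,x_1}_{a_1},\ldots,\underbrace{x_d,\ldots,x_d}_{a_d}\bigr)=x_1^{a_1}\cdots x_d^{a_d},
\]
which is a size-$m$ real symmetric linear pencil. Moreover, if $p_1,p_2\in\RR[x_1,\ldots,x_d]$ admit real symmetric representations $L_1(x),L_2(x)$ of sizes $n_1,n_2$, then their product $p_1p_2$ admits the block-diagonal representation $\operatorname{diag}(L_1(x),L_2(x))$ of size $n_1+n_2$. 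So the whole game reduces to realizing sums of monomials.

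The main step is a bordering construction via the Schur-complement identity
\[
\det\begin{pmatrix}L(x)&v(x)\\v(x)^\trans&c(x)\end{pmatrix}=c(x)\det L(x)-v(x)^\trans\adj(L(x))\,v(x),
\]
where $L(x)$ is an already-constructed symmetric pencil of size $n-1$, $v(x)$ is a column of linear forms, and $c(x)$ is a linear form. The corrector $v(x)^\trans\adj(L(x))v(x)$ has degree at most $\deg(\det L)+1$, so by bordering iteratively one produces representations for polynomials of successively higher degree. Arranging that, as $L$, $v$, and $c$ vary, the resulting family spans $\RR[x_1,\ldots,x_d]_{\leq m}$ is the heart of the matter; I would do this by induction on degree, at each stage fixing a symmetric representation of the previous Renegar-derivative-like ``lower stratum'' and using the bordering to insert the remaining top-degree monomials one or several at a time, absorbing the unwanted cross terms into the adjugate correction.

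The key obstacle is precisely that the Schur complement contributes $-v^\trans\adj(L)v$, not a clean additive term: one must choose $L$ so that the adjugate has a tractable structure (for instance, by taking $L$ to itself be a bordered pencil of a known simple form) and then solve for $v$ and $c$ so that the correction precisely matches the polynomial one wishes to add. This bookkeeping, in the end, should produce a bound on $n$ of the order $\binom{d+m}{m}$. An alternative route is to first produce a possibly non-symmetric linear matrix pencil $L(x)$ with $\det L(x)=p(x)$ by the classical Valiant-type reduction from an arithmetic circuit for $p$, and then symmetrize; but the obvious symmetrization $\bigl(\begin{smallmatrix}0&L\\L^\trans&0\end{smallmatrix}\bigr)$ yields only $\pm p^2$, so a genuine symmetrization procedure would still be needed, and the direct bordering construction sketched above seems more economical.
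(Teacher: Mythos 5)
The paper itself does not prove this theorem; it cites three independent proofs, attributing the original to \cite{HMcCV06} (via liftings to the free algebra and noncommutative realization theory, followed by a symmetrization argument), an elementary proof to \cite{Qua1}, and a proof via algebraic circuit complexity to \cite{GKKP11}. Your sketch is closest in spirit to the elementary recursive approach of Quarez: realize monomials by diagonal pencils, realize products by block-diagonal concatenation, and then try to capture the additive structure by a Schur-complement bordering. That is a sound overall strategy and you correctly identify where the difficulty lies. What you have written, however, is not yet a proof, and there are two substantive issues.

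First, the degree estimate for the corrector is wrong as stated. If $L$ has size $n-1$, each entry of $\adj L$ is an $(n-2)\times(n-2)$ minor of a matrix of linear forms, hence has degree at most $n-2$, so $v^\trans \adj(L)\,v$ has degree up to $n$. This equals $\deg(\det L)+1$ only when $\det L$ has the generic degree $n-1$; as soon as you pad $L$ (for instance with $\operatorname{diag}(L,1)$ blocks, which the recursion naturally produces), the corrector degree can exceed $\deg(\det L)+1$. Your inductive control on degrees is therefore not as clean as you suggest and needs to be redone.

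Second, and more seriously, the core inductive step --- ``absorbing the unwanted cross terms into the adjugate correction'' --- is precisely the theorem, and you offer no mechanism for it. For a fixed $L$, the achievable polynomials $c\,\det L - v^\trans \adj(L)\,v$ form a very constrained family: the second term is a quadratic form, with coefficients dictated by $\adj L$, in the entries of the linear vector $v$. You would need a precise inductive hypothesis (for example: for every degree $\leq k$ polynomial there is a symmetric pencil whose determinant is that polynomial \emph{and} whose adjugate has a prescribed tractable block structure) together with an explicit choice of $v$ and $c$ at each step, and a verification that the cross terms introduced at stage $k$ can be written off against terms already under control. Without that, the argument is a plausibility sketch, not a proof. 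I would encourage you either to work out the induction concretely in low degree ($m=2,3$ in two or three variables) to see whether your bordering move really does span, or to look at Quarez's construction, which resolves this bookkeeping explicitly (with a bound on $n$ that is indeed of the order $\binom{d+\lfloor m/2\rfloor}{d}$ rather than $\binom{d+m}{m}$). Your closing remark about the Valiant-style route and the inadequacy of the naive symmetrization $\bigl(\begin{smallmatrix}0&L\\L^\trans&0\end{smallmatrix}\bigr)$ is correct; that is exactly the difficulty that \cite{GKKP11} overcome with a genuine symmetrization of weakly skew circuits, so it is a real alternative, but again one that requires a nontrivial new idea, not just ``a genuine symmetrization procedure.''
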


Theorem \ref{thm:symdetrep} was first established in \cite{HMcCV06} using free noncommutative techniques.
More precisely, the method was to take a lifting of $p$ to the free algebra and to apply
results of noncommutative realization theory to first produce a determinantal representation
with $A_0,A_1,\ldots,A_d \in \mat{\RR}{n}$ and then to show that it is symmetrizable;
see \cite[Section 14]{HMcCV06} for details and references.
An alternate proof of Theorem \eqref{thm:symdetrep} that uses more elementary arguments was given 
in \cite{Qua1}. 
As it turns out, determinantal representations also appear naturally in algebraic complexity theory,
and a proof of Theorem \eqref{thm:symdetrep} from this perspective was given in \cite{GKKP11}.

Unfortunately, the analogue of Theorem \ref{thm:symdetrep} for positive real symmetric (or positive self-adjoint)
determinantal representations fails. Counterexamples were first established in \cite{Br1}, and subsequently in \cite{NT1}.
Indeed we have

\begin{prop} \label{prop:genposetc}
A general $RZ_{x^0}$ polynomial $p \in \RR[x_1,\ldots,x_d]$ of degree $m$ with $p(x^0)=1$
does not admit a determinantal representation \eqref{eq:affine_detrep_no_h},
where $A_0,A_1,\ldots,A_d \in \herm\mat{\CC}{n}$ for some $n \geq m$
with $A_0+x^0_1A_1+\cdots+x^0_dA_d=I$,
for any fixed $m \geq 4$ and $d$ large enough or for any fixed $d \geq 3$ and $m$ large enough. 
\end{prop}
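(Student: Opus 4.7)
The plan is a dimension count, comparing the moduli of size-$n$ positive definite Hermitian determinantal representations (modulo unitary equivalence) with the space of $RZ_{x^0}$ polynomials of degree $m$. After translating so that $x^0 = 0$, the normalization becomes $A_0 = I$, and a representation of size $n \ge m$ is a tuple $(A_1, \dots, A_d)$ of $n \times n$ Hermitian matrices living in a real vector space of dimension $dn^2$. Simultaneous conjugation by $U(n)$ preserves the determinant, and the generic stabilizer consists of the scalar unitaries, so the effective moduli $\mathcal{R}_n$ of size-$n$ representations has real dimension $(d-1)n^2 + 1$.

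The crucial step is a reduction to the minimal case $n = m$. Given $M(x) = I + \sum_i x_i A_i$ of size $n > m$ with $\det M$ of degree $m$, the homogeneous pencil $\sum_{i=1}^d X_i A_i$ at the hyperplane at infinity $X_0 = 0$ has determinant identically zero with the appropriate multiplicity $n-m$, and one expects a Kronecker--Weierstrass-type normal form for Hermitian pencils to produce a common reducing subspace of dimension $n - m$ on which every $A_i$ vanishes. This yields a unitary block decomposition $M(x) = M'(x) \oplus I_{n-m}$ with $M'$ an $m \times m$ Hermitian pencil having the same determinant. Establishing this reduction --- in particular, passing from the pointwise kernel at infinity to a global common nullspace for the $A_i$'s --- is the principal technical obstacle of the argument; once it is in place, the moduli of representations of any size $n \ge m$ whose determinant has degree $m$ embeds, via the block form, into $\mathcal{R}_m$ of dimension $(d-1)m^2 + 1$.

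Finally, the $RZ_{0}$ polynomials of degree $m$ with value $1$ at the origin form an open subset of the real vector space of such polynomials, of dimension $\binom{m+d}{d} - 1$. A general such polynomial therefore admits no Hermitian representation of any size $n \ge m$ as soon as
\begin{equation*}
(d-1)m^{2} + 2 < \binom{m+d}{d}.
\end{equation*}
For $d = 3$ one has the identity $\binom{m+3}{3} - 2m^{2} - 2 = (m-1)(m-2)(m-3)/6$, which is strictly positive precisely for $m \ge 4$, so the case $d = 3$, $m \ge 4$ is immediate. For fixed $d \ge 3$ and $m$ growing, the right-hand side grows like $m^{d}/d!$ while the left-hand side is only quadratic in $m$; for fixed $m \ge 4$ and $d$ growing, the right-hand side is a polynomial in $d$ of degree $m \ge 4$ against the linearly growing left-hand side. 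In both of the regimes stated in the proposition the inequality holds, which rules out Hermitian determinantal representations of a general $RZ_{x^0}$ polynomial of degree $m$.
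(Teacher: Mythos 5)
Your plan (dimension count comparing representations to $RZ$ polynomials) is the same one the paper, following Netzer--Thom, actually uses. The gap is exactly the step you flag as the ``principal technical obstacle'': the reduction from a size-$n$ Hermitian representation with $\det$ of degree $m$ to a size-$m$ one. That reduction is \emph{not} valid in general. The paper explicitly records (quoting \cite[Theorems 2.4 and 2.7]{NT1}) that a positive self-adjoint determinantal representation of size $n$ always splits off a trivial block $I_{n-n'}\oplus 0 \oplus\cdots\oplus 0$ with $n'\leq md$, and only ``in many instances'' can one get $n'=m$. The pointwise kernel of $\sum_{i\geq 1} X_iA_i$ along $X_0=0$ varies with $[X_1:\cdots:X_d]$ and need not supply a common nullspace of dimension $n-m$ for the $A_i$; no Kronecker--Weierstrass argument fixes this.

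The correct (and easier) reduction is: the degree bound forces $\operatorname{rank} A_i \leq m$ for each $i$, so $W=\operatorname{span}\bigl(\operatorname{Im}A_1,\ldots,\operatorname{Im}A_d\bigr)$ has $\dim W\leq md$, and since each $A_i$ is Hermitian, $W^\perp = \bigcap_i \ker A_i$ is a common kernel. Compressing to $W$ gives $n'\leq md$. The technical obstacle you identify therefore dissolves if you aim at the correct, weaker bound. With $n'\leq md$ the dimension count gives the inequality of the paper, roughly $d^3m^2 < \binom{m+d}{d}-1$, rather than your $(d-1)m^2+2<\binom{m+d}{d}$.

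This matters quantitatively: your claim that the case $d=3$, $m\geq 4$ is ``immediate'' is an artifact of the unproved reduction to $n'=m$. With $n'\leq md$ the threshold for $d=3$ is much larger (roughly $m\gtrsim 160$), which is still ``$m$ large enough'' --- precisely the form the proposition asserts. So your final conclusion for the two asymptotic regimes is right, but the explicit bound and the $d=3$, $m\geq 4$ assertion are not, and the reduction lemma as you state it is false.
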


Here ``large enough'' means that $d^3 m^2 < \binom{m+d}{m} - 1$. We refer to \cite[Section 3]{NT1} for details
and numerous examples of $RZ$ polynomials that do not admit a positive self-adjoint determinantal representation
as in Proposition \ref{prop:genposetc}. One simple example is
\begin{equation} \label{eq:badquadratic}
p = (x_1+1)^2 - x_2^2 - \cdots - x_d^2
\end{equation}
for $d \geq 5$ (for $d=4$ this polynomial admits a positive self-adjoint determinantal representation
but does not admit a positive real symmetric determinantal representation).
The proofs are based on the fact that a positive self-adjoint (or real symmetric) determinantal representation
of size $n$ always contains, after a unitary (or orthogonal) transformation of the matrices $A_0,A_1,\ldots,A_d$,
a direct summand $I_{n-n'} + x_10_{n-n'}+\cdots+x_d0_{n-n'}$ ---
yielding a determinantal representation of size $n'$ ---
for relatively small $n'$:
one can always take $n' \leq md$ and in many instances one can actually take $n' = m$,
see \cite[Theorems 2.4 and 2.7]{NT1}.
It would be interesting to compare these results with the various general conditions for decomposability
of determinantal representations obtained in \cite{KV2,KV1}.

\subsection{}

The next easiest way to establish Conjecture \ref{conj:gend} is to try taking $h$ in \eqref{eq:affine_detrep}
to be a power of $p$, $h=p^{r-1}$,
so that we are looking for a positive real symmetric determinantal representation of $p^r$,
\begin{equation} \label{eq:affine_detrep_power}
\det(A_0 + x_1A_1 + \cdots + x_dA_d) = p(x)^r;
\end{equation}
in the homogeneous version, $\tilde H = P^{r-1} \cdot X_0^{n-mr}$ in \eqref{proj_detrep}.
If we do not require positivity or real symmetry,
then at least for $p$ irreducible,
$p^r$ admits a determinantal representation
\eqref{eq:affine_detrep_power} with 
$A_0,A_1,\ldots,A_d \in \mat{\CC}{n}$, $n=mr$, for some $r \in \NN$;
this follows by the theory of matrix factorizations \cite{Eis80}, 
since $pI_r$ can be written as a product of matrices with linear entries,
see \cite{BHS88,HUB91} (and also the references in \cite{NT1}).

As established in \cite{Br1}, the answer for positive real symmetric determinantal representations is again no.
Namely, let $p$ be a polynomial of degree $4$ in $8$ variables
labeled $x_a,x_b,x_c,x_d,x_{a'},x_{b'},x_{c'},x_{d'}$,
defined by
\begin{equation} \label{eq:vamos_poly}
p = \sum_{S \in \cB(V_8)} \prod_{j \in S} (x_j+1),
\end{equation}
where $\cB(V_8)$ is the set consisting of all $4$-element subsets
of $\{a,b,c,d,a',b',c',d'\}$ except for
$$
\{a,a',b,b'\},\ \{b,b',c,c'\},\ \{c,c',d,d'\},\ \{d,d',a,a'\},\ \{a,a',c,c'\}.
$$
$\cB(V_8)$ is the set of bases of a certain matroid $V_8$ 
on the set $\{a,b,c,d,a',b',c',d'\}$ called the {\em Vamos cube}.
Then 

\begin{thm} \label{thm:badvamos}
$p$ is $RZ$ with respect to $0$, and
for all $r \in \NN$, the polynomial $p^r$ does not admit a determinantal
representation \eqref{eq:affine_detrep_power}
where $A_0,A_1,\ldots,A_d \in \sym\mat{\RR}{n}$ for some $n \geq mr$
with $A_0=I$.
\end{thm}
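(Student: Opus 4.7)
The plan is to prove the two assertions independently: first the $RZ_0$ property of $p$, and then the non-existence of the determinantal representation of $p^r$ for every $r$.

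For the $RZ_0$ property, I would appeal to the half-plane property of the Vamos matroid. Let
$$ Q(x) = \sum_{S \in \cB(V_8)} \prod_{j \in S} x_j $$
be the basis generating polynomial of $V_8$. Wagner and Wei showed that $V_8$ has the half-plane property, i.e., $Q$ is real stable: it has no zero with $\operatorname{Im} x_j > 0$ for every $j$. Since $p(x) = Q(x + \mathbf{1})$ with $\mathbf{1} = (1,\ldots,1)$, stability of $Q$ transfers directly into the $RZ_0$ condition for $p$: along any line $\mathbf{1} + t\xi$ with $\xi \in \RR^8$, the univariate polynomial $Q(\mathbf{1} + t\xi) = p(t\xi)$ cannot have a root with non-real $t$, because the coordinates of $\mathbf{1} + t\xi$ all move into the open upper (or lower) half-plane as soon as $\operatorname{Im} t \neq 0$.

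For the non-existence part, I would argue by contradiction. Assume $\det(I + \sum_j x_j A_j) = p(x)^r$ with $A_1,\ldots,A_8 \in \sym\mat{\RR}{n}$ and $n \geq 4r$. The central idea is to extract from this identity a real linear realization of $V_8$, contradicting the well-known fact that $V_8$ is not representable over any field. Concretely, I would expand the determinant around the origin by Laplace expansion and compare coefficients of the multilinear monomials $\prod_{j \in S} x_j$ with $|S| = 4$. On the right-hand side, extracted from \eqref{eq:vamos_poly}, these coefficients vanish exactly for the five excluded $4$-subsets. On the left-hand side, after reducing to the positive semidefinite components of the $A_j$ (made available by the rigid convexity of the spectrahedron near the origin, together with $A_0 = I$), each such coefficient becomes a Cauchy--Binet-style sum of squares of $4 \times 4$ minors of block matrices built from the $A_j$. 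Matching the vanishing pattern then forces the columns of those blocks to realize the non-bases of $V_8$ as linear dependencies, i.e., to give a real linear representation of $V_8$.

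The main obstacle will be handling arbitrary powers $r$. For $r = 1$ the matching above directly yields a representation of $V_8$ and produces an immediate contradiction; but for $r \geq 2$ the monomial support of $p^r$ is much larger than that of $p$, and the relevant matroidal coefficients are no longer pure multilinear coefficients but must be disentangled from multinomial contributions. The technical heart of the argument, which I would follow Br\"and\'en in carrying out, is that such disentanglement is nevertheless possible: a suitable substructure of any hypothetical positive real symmetric determinantal representation of $p^r$ still encodes a linear representation of $V_8$, essentially via an $r$-fold duplication of the ground set, from which the non-representability of $V_8$ produces the desired contradiction.
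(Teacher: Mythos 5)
Your two key ingredients are the right ones, and they are exactly what the paper invokes (citing Br\"and\'en): the half-plane property of $V_8$ for the $RZ_0$ claim, and the non-linear-representability of $V_8$ for the non-existence claim. However, each of your two steps has a genuine gap as written. For the $RZ_0$ claim, the assertion that ``the coordinates of $\mathbf{1}+t\xi$ all move into the open upper (or lower) half-plane as soon as $\operatorname{Im} t\neq 0$'' is false for general $\xi\in\RR^8$: the imaginary part of $1+t\xi_j$ is $\operatorname{Im}(t)\,\xi_j$, whose sign varies with the sign of $\xi_j$, so stability does not apply directly along a generic line through $\mathbf 1$. The correct route is to homogenize: since $p$ has degree $4$ and $Q$ is homogeneous of degree $4$, $P(X_0,X)=Q(X+X_0\mathbf 1)$, so $p$ being $RZ_0$ is the same as $Q$ being hyperbolic in direction $\mathbf 1$; and a homogeneous real stable polynomial is hyperbolic with respect to every vector in the positive orthant, in particular $\mathbf 1$.

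For the non-existence claim, your appeal to ``positive semidefinite components of the $A_j$ made available by rigid convexity'' is not precise and doesn't hold at the origin in the affine picture: the $A_j$ in $\det(I+\sum x_jA_j)=p^r$ need not be PSD. The actual mechanism is to homogenize and change coordinates $Y_j=X_j+X_0$, $Y_0=X_0$, to get $\det\bigl(\sum_j Y_j A_j\bigr)=Q(Y)^r$ (since $Q$ is already homogeneous), and then to observe that the hyperbolicity cone of the right-hand side contains the open positive orthant (by stability of $Q$); taking limits toward the coordinate directions forces each $A_j\geq 0$. Only then does a Cauchy--Binet expansion with $A_j=L_jL_j^\trans$ produce the sum-of-squares structure you describe. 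Finally, the paper's cited obstruction is not exactly ``the $A_j$ would give a linear representation of $V_8$'' --- what one actually extracts is a linearly representable rank function on an $r$-fold duplicated ground set, and the contradiction is that $V_8$'s rank function would then have to satisfy Ingleton's inequality, which it does not. Your framing for $r=1$ (where each $A_j$ has rank $1$) does collapse to a genuine $\RR$-representation of $V_8$, but for $r\geq 2$ --- which you rightly flag as the crux --- the argument must go through Ingleton rather than through direct representability, and your sketch defers exactly this step to Br\"and\'en without supplying it.
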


This follows since on the one hand,
$V_8$ is a half-plane property matroid, and on the other hand,
it is not representable over any field,
more precisely its rank function does not satisfy Ingleton inequalities.
See \cite[Section 3]{Br1} for details.
Notice that it turns out that one can take without loss of generality $n=mr$ in Theorem \ref{thm:badvamos}, 
see the paragraph following Proposition \ref{prop:genposetc} above.
Notice also that because of the footnote on page \pageref{foot:herm}, it does not matter here
whether we are considering real symmetric or self-adjoint determinantal representations.

The polynomial \eqref{eq:vamos_poly} remains so far the only example of a $RZ$ polynomial no power of which
admits a positive real symmetric determinantal representation\footnote{
Peter Br\" and\' en noticed
(see \url{http://www-e.uni-magdeburg.de/ragc/talks/branden.pdf}) that one can use 
the symmetry of the polynomial (14) to produce from it a $RZ$ polynomial in $4$ variables
no power of which admits a positive real symmetric determinantal representation.}.
For instance, we have

\begin{thm} \label{thm:goodquadratic}
Let $p \in \RR[x_1,\ldots,x_d]$ be a $RZ_{x^0}$ polynomial of degree $2$ with $p(x^0)=1$.
Then there exists $r \in \NN$
and $A_0,A_1,\ldots,A_d \in \sym\mat{\RR}{n}$, $n=mr$, 
with $A_0+x^0_1A_1+\cdots+x^0_dA_d = I$, such that $p^r$ admits the determinantal representation
\eqref{eq:affine_detrep_power}.
\end{thm}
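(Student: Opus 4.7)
The plan is to exploit the very rigid structure of quadratic $RZ$ polynomials --- they correspond to quadratic forms of Lorentzian signature --- so that the classical Clifford-algebra construction of determinantal representations of hyperbolic quadrics can be pushed through over $\RR$, at the modest cost of passing to a power. First I would translate by $x\mapsto x+x^0$ to reduce to $x^0=0$; the original normalisation can be recovered at the end by absorbing $\sum x^0_j A_j$ into $A_0$.

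Write $p(x) = 1 + 2a^{\trans}x + x^{\trans}Bx$ with $a\in\RR^d$ and $B\in\sym\mat{\RR}{d}$. The $RZ_0$ condition says that for every $v\in\RR^d$ the quadratic $p(tv) = 1 + 2(a^{\trans}v)t + (v^{\trans}Bv)t^2$ has real roots, which amounts to $(a^{\trans}v)^2 \geq v^{\trans}Bv$ for every~$v$, i.e.\ $N := aa^{\trans} - B \succeq 0$. Completing the square and orthogonally diagonalising $N$ produce the normal form
\begin{equation*}
p(x) \;=\; L_0(x)^2 - L_1(x)^2 - \cdots - L_k(x)^2, \qquad L_0(x) = 1 + a^{\trans}x,
\end{equation*}
where $k = \operatorname{rank} N \leq d$ and each $L_i$, $i\geq 1$, is a real linear form with $L_i(0)=0$.

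The second step encodes this signature $(1,k)$ decomposition in matrices. Take the real symmetric matrices $S=\bigl(\begin{smallmatrix}0&1\\1&0\end{smallmatrix}\bigr)$ and $T=\bigl(\begin{smallmatrix}1&0\\0&-1\end{smallmatrix}\bigr)$, and, for $k\geq 1$ and $i=1,\ldots,k$, set
\begin{equation*}
E_i \;=\; \underbrace{S\otimes\cdots\otimes S}_{i-1} \otimes T \otimes \underbrace{I\otimes\cdots\otimes I}_{k-i} \;\in\; \sym\mat{\RR}{2^k}.
\end{equation*}
Using $S^2 = T^2 = I$ and $ST + TS = 0$, one checks directly $E_iE_j + E_jE_i = 2\delta_{ij}I$ and $\operatorname{tr}E_i = 0$. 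With $n := 2^k$ and $r := 2^{k-1}$ (and $n = 2$, $r = 1$ in the degenerate case $k = 0$, with no generators), define
\begin{equation*}
M(x) \;=\; L_0(x)\,I_n + \sum_{i=1}^k L_i(x)\,E_i.
\end{equation*}
The anti-commutation relations give $\bigl(\sum_i L_iE_i\bigr)^{\!2} = \bigl(\sum_i L_i^2\bigr)I_n$, so the eigenvalues of $\sum_i L_iE_i$ are $\pm\sqrt{\sum_i L_i^2}$; the tracelessness of each $E_i$ forces these to occur with equal multiplicity~$r$. Consequently
\begin{equation*}
\det M(x) \;=\; \bigl(L_0(x)^2 - \textstyle\sum_i L_i(x)^2\bigr)^{\!r} \;=\; p(x)^r,
\end{equation*}
and $M(x)$ is a real symmetric linear pencil with $M(0) = I_n$ of size $n = 2r = mr$, exactly as required.

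The only step calling for some care is the tracelessness of the Clifford generators, which is what guarantees the eigenvalue multiplicities of $\sum_i L_iE_i$ balance and yields exactly $p^r$ on the nose rather than some unrelated combination of the conjugate factors $L_0 \pm \sqrt{\sum L_i^2}$. If one prefers to invoke a more abstract representation of the Clifford relations, the elementary substitution $E_i \mapsto \operatorname{diag}(E_i,-E_i)$ enforces tracelessness while preserving all the relations, at the cost of doubling~$n$ (and hence~$r$). Everything else is routine linear algebra, and the argument is constructive: given the normal form of $p$, the matrices $A_0,\ldots,A_d$ are written down explicitly.
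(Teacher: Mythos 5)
Your argument is correct, and it realizes the same Clifford-algebra idea that the paper attributes to \cite{NT1}, only in a more direct and elementary form. Rather than passing through the abstract generalized Clifford algebra of $p$ and its finite-dimensional unital $^*$-representations, you reduce a quadratic $RZ_0$ polynomial to the Lorentzian normal form $L_0^2-\sum_{i=1}^k L_i^2$ via the observation that the $RZ_0$ condition is precisely that $aa^\trans-B$ is positive semidefinite, and then write down explicit anti-commuting, traceless, real symmetric tensor-product Clifford generators $E_i\in\sym\mat{\RR}{2^k}$. The determinant computation from $(\sum_i L_iE_i)^2=(\sum_i L_i^2)I$ together with the balanced $\pm$-eigenvalue multiplicities (forced by tracelessness) gives $\det M = p^r$ with $n=2^k=2r=mr$, exactly as required, and $M(x^0)=I$ comes for free. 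What your route buys is a completely self-contained, constructive proof that also handles the reducible case ($p$ a product of two affine linear forms) and the degenerate case $k=0$ uniformly, whereas the paper's sketch via the generalized Clifford algebra restricts to irreducible $p$; what the framework of \cite{NT1} buys is a general criterion (existence of a finite-dimensional $^*$-representation of the generalized Clifford algebra) that in principle applies to arbitrary degree $m$, with $m=2$ as the special case where the algebra is ``almost'' the usual Clifford algebra.
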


Theorem \ref{thm:goodquadratic} has been established in \cite{NT1} using Clifford algebra techniques.
More precisely, one associates to a polynomial $p \in \RR[x_1,\ldots,x_d]$ of degree $m$
a unital $^*$-algebra as follows.
Let $\CC\langle z_1,\ldots,z_d \rangle$ be the free $^*$-algebra on $d$ generators, i.e., 
$z_1,\ldots,z_d$ are noncommuting self-adjoint indeterminates. For the homogenization $P$ of $p$,
we can write
$$
P(-x_1z_1-\cdots-x_dz_d,x_1,\ldots,x_d) = \sum_{k \in \ZZ^d_+,\,|k|=m} q_k(z) x^k,
$$
for some $q_k \in \CC\langle z_1,\ldots,z_d \rangle$, where $k=(k_1,\ldots,k_d)$,
$|k|=k_1+\cdots+k_d$, and $x^k=x_1^{k_1} \cdots x_d^{k_d}$.
We define the {\em generalized Clifford algebra associated with $p$} to be the quotient
of $\CC\langle z_1,\ldots,z_d \rangle$ by the two-sided ideal generated by $\{q_k\}_{|k|=m}$.
It can then be shown that at least if $p$ is irreducible,
$p^r$ admits a self-adjoint determinantal representation
\eqref{eq:affine_detrep_power} of size $mr$ with $A_0=I$ for some $r \in \NN$ if and only if
the generalized Clifford algebra associated with $p$ admits a finite-dimensional
unital $^*$-representation. In case $m=2$ and $p$ is an irreducible $RZ_0$ polynomial,
the generalized Clifford algebra associated with $p$ turns out to be ``almost'' the usual Clifford algebra,
yielding the proof of Theorem \ref{thm:goodquadratic}. For details and references,
see \cite[Sections 4 and 5]{NT1}. 
It would be interesting to investigate the generalized Clifford algebra associated with 
the polynomial \eqref{eq:vamos_poly}.\footnote{
Tim Netzer recently reported
(see \url{http://www-e.uni-magdeburg.de/ragc/talks/netzer.pdf}) 
that for an irreducible $RZ_0$ polynomial $p$ with $p(0)=1$,
Conjecture 5 holds (with $x^0=0$) if and only if $-1$ is not a sum of hermitian squares
in the generalized Clifford algebra associated with $p$.}

A new obstruction to powers of $p$ admitting a positive real symmetric determinantal representation
has been recently discovered in \cite{NPT1}. It is closely related to the question
of how to test a polynomial for the $RZ$ condition, see \cite{Hen10}.
For any monic polynomial $f \in \RR[t]$ of degree $m$ with zeroes $\lambda_1,\ldots,\lambda_m$,
let us define the matrix $H(f) = \left[h_{ij}\right]_{i,j=1,\ldots,m}$
by $h_{ij} = \sum_{k=1}^d \lambda_k^{i+j-2}$; notice that $h_{ij}$ are actually polynomials
in the coefficients of $f$. $H(f)$ is called the Hermite matrix of $f$,
and it is positive semidefinite if and only if all the zeroes of $f$ are real.
Given $p \in \RR[x_1,\ldots,x_d]$ of degree $m$ with $p(x^0)=1$, 
we now consider $H(\check p_x)$ where $\check p_x(t)=t^m p(x^0+t^{-1}x)$;
it is a polynomial matrix that we call the {\em Hermite matrix} of $p$ with respect to $x^0$ and denote
$H(p;x^0)$.
$p$ is a $RZ_{x^0}$ polynomial if and only if $H(p;x^0)(x) \geq 0$ for all $x \in \RR^d$.
Now, it turns out that if there exists $r \in \NN$ such that
$p^r$ admits a determinantal representation \eqref{eq:affine_detrep_power}
with $A_0,A_1,\ldots,A_d \in \sym\mat{\RR}{n}$, $n=mr$, 
and $A_0+x^0_1A_1+\cdots+x^0_dA_d = I$, then $H(p;x^0)$ can be factored:
$H(p;x^0) = Q^\trans Q$ for some polynomial matrix $Q$, i.e., $H(p;x^0)$ is a {\em sum of squares}.
We notice that $H(p;x^0)$ can be reduced by homogeneity to a polynomial matrix in $d-1$ variables,
implying that the sum of squares decomposition (factorization) is not an obstruction in the case $d=2$,
but it is in the case $d>2$. In particular, there is numerical evidence that for the polynomial $p$
of \eqref{eq:vamos_poly} the Hermite matrix (with respect to $0$) is not a sum of squares.
We refer to \cite{NPT1} for details.
It would be very interesting to use these ideas in the case $d=2$
to obtain a new proof of a weakened version of Theorem \ref{thm:2d} that gives a positive real symmetric
determinantal representation of $p^r$ (of size $mr$) for some $r \in \NN$.

\subsection{}

There have been so far no attempts to pursue Conjecture \ref{conj:gend}
with other choices of $h$ than $1$ or a power of $p$.
Two natural candidates are products of (not necessarily distinct) linear forms
(that are nonnegative on the closure of the connected component of $x^0$ in 
$\{x \in {\mathbb R}^d \colon p(x) > 0\}$),
and products of powers of Renegar derivatives of $p$ with respect to $x^0$ (see \eqref{eq:renegar_der}).

Conjecture \ref{conj:gend} is a reasonable generalization of Theorem \ref{thm:2d} for the purposes of LMI representations
of convex sets (provided the solution gives a good hold of the extra factor $h$ and of the size $n$).
It is less satisfactory as a means of describing or generating $RZ$ polynomials. The following alternative conjecture,
that was proposed informally by L.~Gurvits, might be more useful for that purpose. It is based on the fact that we have 
two systematic ways of generating $RZ$ polynomials: positive real symmetric (or self-adjoint) 
determinantal representations and Renegar derivatives.

\begin{conj} \label{conj:gurvits}
Let $p \in \RR[x_1,\ldots,x_d]$ be a $RZ_{x^0}$ polynomial of degree $m$ with $p(x^0)=1$.
Then there exist $k \in \ZZ_+$, a $RZ_{x^0}$ polynomial $q \in \RR[x_1,\ldots,x_d]$ of degree $m+k$
such that $p=q^{(k)}_{x^0}$, and
$A_0,A_1,\ldots,A_d \in \sym\mat{\RR}{(m+k)}$ 
with $A_0+x^0_1A_1+\cdots+x^0_dA_d = I$, such that
$$
\det(A_0+x_1A_1+\cdots+x_dA_d) = q(x).
$$
\end{conj}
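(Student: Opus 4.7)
The natural first step is to translate Conjecture \ref{conj:gurvits} into a concrete statement about sums of principal minors. By translation we may assume $x^0 = 0$; the normalization $A_0 = I$ combined with the elementary-symmetric-polynomial identity $e_j(\lambda(M)) = \sum_{|S|=j} \det M[S]$ (where $M[S]$ denotes the principal submatrix on the index set $S$) then gives
\begin{equation*}
q^{(k)}_0(x) = k! \sum_{|S|=m} \det\bigl((I_n + x_1 A_1 + \cdots + x_d A_d)[S]\bigr), \quad n = m+k.
\end{equation*}
Thus, up to a positive scalar, Conjecture \ref{conj:gurvits} is equivalent to the assertion that every $RZ_0$ polynomial $p$ of degree $m$ with $p(0)>0$ equals the sum of all $m \times m$ principal minors of some symmetric linear pencil $I_n + x_1 A_1 + \cdots + x_d A_d$, with $A_1, \ldots, A_d \in \sym\mat{\RR}{n}$ and $n = m+k$ for some $k \in \ZZ_+$.

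With this reformulation in hand, my starting point would be Theorem \ref{thm:symdetrep}: obtain some symmetric (not necessarily positive) determinantal representation $p(x) = \det(B_0 + x_1 B_1 + \cdots + x_d B_d)$ of size $m$, and then attempt to inflate it to a size-$(m+k)$ pencil whose sum of $m \times m$ principal minors collapses back to a scalar multiple of $p$. Two natural ansätze are: (i) a block-diagonal inflation $\operatorname{diag}(I_k, C(x))$ with $C(x)$ a rescaled and signature-corrected version of $B_0 + \sum_i x_i B_i$, in which the unique $m \times m$ principal minor entirely in the lower block contributes $\det C(x) \propto p(x)$; and (ii) a Schur-complement inflation with lower-right block $B(x)$, exploiting the Jacobi identity for minors to analyze the contributions of minors straddling both blocks. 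When $p$ is irreducible, one could alternatively try to exploit the generalized Clifford algebra of $p$ introduced in \cite{NT1}: a finite-dimensional unital $^*$-representation of a suitable enlargement of this algebra should translate directly into the sought pencil, with the Renegar-derivative sum playing the role of a regular trace.

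The principal obstacle lies in controlling the combinatorics of the extra $m \times m$ principal minors in the inflated pencil. Generically these are $RZ_0$ polynomials of degree $m$ that are algebraically independent of $p$, so the inflation must impose rigid relations among the $A_i$ for the extra terms to aggregate into a multiple of $p$. The Vamos example of Theorem \ref{thm:badvamos} together with Proposition \ref{prop:genposetc} shows that no inflation of the form $q = p^r$ can succeed, so any valid argument must genuinely use the extra flexibility of a Renegar anti-derivative. On the positive side, the parameter count is favorable: a size-$(m+k)$ symmetric pencil carries $d\binom{m+k+1}{2}$ free parameters, growing quadratically in $k$, whereas matching $p$ amounts to $\binom{m+d}{d}$ constraints independent of $k$; so for $k$ sufficiently large the system is generously underdetermined. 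Turning this into an actual existence argument --- perhaps via a perturbation argument in the spirit of \cite{Nui68, Br1} combined with a closedness argument on the space of $RZ_0$ polynomials admitting a positive symmetric representation --- seems to require substantially new ideas and is where I expect the heart of the difficulty to lie.
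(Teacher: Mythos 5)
Conjecture~\ref{conj:gurvits} is stated in the paper as an open problem (the paper attributes it to an informal suggestion of L.~Gurvits), and no proof is given or claimed. There is therefore no proof in the paper to compare your attempt against, and you yourself acknowledge at the end that your sketch does not resolve the conjecture --- correctly so: what you have written is an exploration of possible routes, not an argument.

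That said, your opening reformulation is correct and is the most substantive part of the proposal. Taking $x^0=0$, $A_0=I$, and $n=m+k$, one indeed has
\begin{equation*}
q^{(k)}_0(x) \;=\; k!\,e_m\bigl(\lambda(I_n + x_1A_1+\cdots+x_dA_d)\bigr) \;=\; k!\sum_{|S|=m}\det\bigl((I_n + x_1A_1+\cdots+x_dA_d)[S]\bigr),
\end{equation*}
which I verified by expanding $\det(X_0I+M(X))=\sum_j X_0^{n-j}e_j(\lambda(M(X)))$ in powers of $X_0$, differentiating $k$ times, and using $e_m(1+\lambda)=\sum_j\binom{n-j}{k}e_j(\lambda)$. (A small point: since $q(x^0)=1$ forces $q^{(k)}_{x^0}(x^0)=(m+k)!/m!$, the equality $p=q^{(k)}_{x^0}$ in the conjecture should be read up to a positive scalar, exactly as you do.) Your remark that the Vamos example of Theorem~\ref{thm:badvamos} rules out any inflation obtained purely from $q=p^r$ is also correct and shows the conjecture genuinely requires the flexibility of a Renegar anti-derivative. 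However, the two ansätze you sketch (block-diagonal and Schur-complement inflations) and the generalized-Clifford-algebra route are only stated as directions, not carried out, and the parameter count is heuristic: it ignores the positivity and $RZ$ constraints, which is precisely where the known obstructions live, so "generously underdetermined" cannot by itself yield existence. In short, your reformulation is right and worth having, but as you concede, no proof is reached --- which is consistent with the statement being an open conjecture in the paper.
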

 
\section{Determinantal representations of homogeneous polynomials and sheaves on projective hypersurfaces}
\label{sec:sheaves}

The kernel of a determinantal representation of a homogeneous polynomial is a sheaf 
on the corresponding projective hypersurface from which the representation itself can be reconstructed.
We consider here the ways to do so that use the duality between the kernel and the left kernel;
this gives the only known approaches to the proof of Theorem \ref{thm:2d}.
For a different way using the resolution of the kernel sheaf see \cite{Bea00}; we refer also to the bibliography
in \cite{Bea00,KV1} and to \cite[Chapter 4]{Dol1} and the references therein
for more about this old topic in algebraic geometry.

\subsection{} \label{subsec:detrep_and_adj}

Let $P \in \CC[X_0,X_1,\ldots,X_d]$ ($d > 1$) be a reduced (i.e., without multiple irreducible factors)
homogeneous polynomial of degree $m$, and let
\begin{equation} \label{eq:proj_hyper}
\cV_P = \{[X] \in \CC^d \colon P(X) = 0\}
\end{equation} 
be the corresponding complex projective hypersurface. Notice that 
when $P$ is a polynomial over $\RR$, $\cV_P$ is naturally endowed with an antiholomorphic
involution $\tau$ (the complex conjugation or the Galois action of $\text{Gal}(\CC/\RR)$)
and the set of fixed points of $\tau$ is exactly 
the real projective hypersurface $\cV_P(\RR)$ as in \eqref{eq:real_proj_hyper}.
Let
\begin{multline} \label{proj_detrep_P}
\det(X_0A_0+X_1A_1+\cdots+X_dA_d) = P(X),\\
A_\alpha=\left[A_{\alpha,ij}\right]_{i,j=1,\ldots,m} \in \mat{\CC}{m},\, 
\alpha=0,1,\ldots,d,
\end{multline}
be a determinantal representation of $P$, and let
\begin{equation} \label{eq:U_and_V}
U = X_0A_0+X_1A_1+\cdots+X_dA_d,\quad V = \left[V_{ij}\right]_{i,j=1,\ldots,m} = \adj U,
\end{equation}
where $\adj Y$ denotes the adjoint matrix of a $m \times m$ matrix $Y$, i.e., 
the matrix whose $(i,j)$ entry is $(-1)^{i+j}$ times the determinant of the matrix obtained from $Y$
by removing the $j$th row and the $i$th column, so that $Y \cdot \adj Y = \det Y \cdot I$.
Notice that
\begin{align}
\label{eq:detadj}
\det V &= P^{m-1},\\
\label{eq:adjadj}
\adj V &= P^{m-2} \cdot U.
\end{align}
Notice also that using the formula for the differentiation of a determinant and row expansion,
\begin{equation} \label{eq:derdetrep}
\frac{\partial P}{\partial X_\alpha} = \sum_{l,k=1}^n A_{\alpha,lk} V_{kl}.
\end{equation}
In particular, $V(X)$ is not zero for a smooth point $[X]$ of the hypersurface $\cV_P$,
so that $V(X)$ has rank $1$ there and $U(X)$ has rank $m-1$.

\subsection{} \label{subsec:sheaves}

We restrict our attention now to the case $d=2$, i.e., $\cV_P$ is a projective plane curve.
Let us assume for a starter
that $P$ is irreducible and that $\cV_P$ is smooth ---
we will explain how to relax this assumption in Section \ref{subsec:sing} below. 
Then we conclude that
$\cL([X]) = \ker U(X)$ is a one-dimensional subspace of $\CC^m$ for all points $[X]$ on $\cV_P$, and
these subspaces glue together to form a line bundle $\cL$ on $\cV_P$;
more precisely, $\cL$ is a subbundle of the trivial rank $m$ vector bundle
$\cV_P \times \CC^m$ whose fiber at the point $[X]$
equals $\cL([X])$. It is convenient to twist and define $\cE = \cL(m-1)$.
More algebraically, $\cE$ is determined by the exact
sequence of sheaves on $\cV_P$
\begin{equation} \label{eq:exact_seq}
0 \longrightarrow \cE \longrightarrow \cO_{\cV_P}^{\oplus m}(m-1) \overset{U}{\longrightarrow}
\cO_{\cV_P}^{\oplus m}(m) \longrightarrow \coker(U) \longrightarrow 0,
\end{equation}   
where $U$ denotes the operator of right multiplication by the matrix acting on columns.
The following are some of the properties of the kernel line bundle.
\begin{enumerate}
\item
The determinantal representation is determined up to a natural equivalence 
(multiplication on the left and on the right by constant invertible matrices)
by the isomorphism class of the line bundle $\cE$.
\item
The columns $F_j = \left[V_{ij}\right]_{i=1,\ldots,m}$ of the adjoint matrix $V$ form a basis for the space
$H^0(\cE,\cV_P)$ of global sections of $\cE$.
\item
$\cE$ satisfies $h^0(\cE(-1))=h^1(\cE(-1))=0$.
\end{enumerate}
See \cite{CT79,Vin89,Bea00} for details. By the Riemann--Roch theorem, $\cE(-1)$ is a line bundle of degree $g-1$
on $\cV_P$ (where $g$ denotes the genus), and it is general in that it has no global sections, i.e., 
it lies on the complement
of the theta divisor in the Jacobian of $\cV_P$.

There is a similarly defined line bundle $\cL_\ell$ on $\cV_P$
with fibres $\cL_\ell([X]) = \ker_\ell U(X)$,
where $\ker_\ell$ denotes the left kernel of a matrix (a subspace of $\rmat{\CC}{1}{m}$);
we set, analogously, $\cE_\ell = \cL_\ell(m-1)$.
$\cE_\ell$ is defined by an exact sequence similar to \eqref{eq:exact_seq}
except that 
$U$ is now acting as the operator of left multiplication by the matrix on rows.
The rows $G_i = \left[V_{ij}\right]_{j=1,\ldots,m}$ of the adjoint matrix $V$ form a basis for the space
$H^0(\cE_\ell,\cV_P)$ of global sections of $\cE_\ell$.
There is furthermore a nondegenerate pairing
$\cE \times \cE_\ell \to \cK_{\cV_P}(2)$ 
(here $\cK_{\cV_P} \cong \cO_{\cV_P}(m-3)$ is the canonical line bundle on $\cV_P$), i.e.,
$\cE_\ell(-1)$ is isomorphic to the Serre dual $(\cE(-1))^* \otimes \cK_{\cV_P}$ of $\cE(-1)$, 
which is key to the reconstruction of the determinantal representation
from the corresponding line bundle.

Notice that if $P$ is a polynomial over $\RR$ and the determinantal representation is self-adjoint then 
$\cE_\ell \cong \cE^\tau$, whereas
if the determinantal representation is real symmetric, then $\cE_\ell \cong \cE^\tau \cong \cE$.
(In fact, in the real symmetric case the line bundle $\cE$ is defined over $\RR$ which is a somewhat
stronger condition than $\cE^\tau \cong \cE$ but the two actually coincide if $\cV_P(\RR) \neq \emptyset$,
see \cite{Vin93} and the references there.)

\subsection{} \label{subsec:theta}

There are two ways to define the pairing $\cE \times \cE_\ell \to \cK_{\cV_P}(2)$. One way, originating in multivariable
operator theory and multidimensional system theory, simply pairs the right and left kernels of the matrix
$U(X)$ against appropriate linear combinations of the coefficient matrices $A_0$, $A_1$, $A_2$;
see \cite{BV96}. This leads to explicit formulae for the coefficient matrices in terms of theta functions,
given a line bundle $\cE(-1)$ on $\cV_P$ with $h^0(\cE(-1))=h^1(\cE(-1))=0$, see \cite[Theorems 4.1 and 5.1]{BV99}. 
It is obvious from these formulae
that choosing $\cE(-1)$ with $(\cE(-1))^* \otimes \cK_{\cV_P} \cong \cE(-1)^\tau \cong \cE(-1)$ 
(i.e., $\cE(-1)$ is a real theta characteristic on $\cV_P$)
yields a real symmetric determinantal representation
(at least in the case $\cV_P(\RR) \neq \emptyset$). 
\cite[Section 4]{HV07} verifies (using the tools developed in \cite{Vin93})
that in case the dehomogenization $p(x_1,\ldots,x_d)=P(1,x_1,\ldots,x_d)$ of the original polynomial $P$
is $RZ$, appropriate choices of $\cE(-1)$ will yield a positive determinantal representation
(to be more precise, the positivity is ``built in'' \cite[(4.1)--(4.3)]{HV07}). 
``Appropriate choices'' means that the line bundle $\cE(-1)$ of degree $g-1$ 
(more precisely, its image under the Abel--Jacobi map) has to belong to 
a certain distinguished real $g$-dimensional torus $T_0$ in the Jacobian of $\cV_P$,
see \cite[Sections 3 and 4]{Vin93}; accidentally, this already forces
$\cE(-1)$ to be in the complement of the theta divisor, i.e., 
the condition $h^0(\cE(-1))=h^1(\cE(-1))=0$ becomes automatic.
It is interesting to notice that recent computational advances in theta functions on Riemann surfaces
make this approach possibly suitable for computational purposes, see \cite{PSV2}.

\subsection{} \label{subsec:adjmatrix}

Another way to define the pairing $\cE \times \cE_\ell \to \cK_{\cV_P}(2)$ is more algebraic and goes back to the classical
paper \cite{D1900}; it uses the adjoint matrix $V$ of the determinantal representation.
This leads to the following construction of the determinantal representation
given a line bundle $\cE(-1)$ on $\cV_P$ with $h^0(\cE(-1))=h^1(\cE(-1))=0$, see \cite{D1900,Vin89,BV96}.
Take bases $\{F_1,\ldots,F_m\}$ and $\{G_1,\ldots,G_m\}$ for the spaces of global sections
of $\cE$ and of $\cE_\ell$, respectively, where $\cE_\ell(-1) := (\cE(-1))^* \otimes \cK_{\cV_P}$ is the Serre dual.
Then $V_{ij}:=\langle F_j, G_i \rangle$ is a global section of $\cK_{\cV_P}(2) \cong \cO_{\cV_P}(m-1)$, 
hence a homogeneous polynomial
in $X_0,X_1,X_2$ of degree $m-1$. 
It can be shown that the matrix $V = \left[V_{ij}\right]_{i,j=1,\ldots,m}$
has rank $1$ on $\cV_P$, implying that \eqref{eq:detadj} holds, up to a constant factor $c$,
and that every entry of $\adj V$ is divisible by $P^{m-2}$. We can now define
a matrix $U$ of linear homogeneous forms by \eqref{eq:adjadj}, and it will be a determinantal representation
of $P$, up to the constant factor $c^{m-1}$.
It remains only to show that the constant factor is not zero, i.e., that 
$\det V$ is not identically zero.
This follows by choosing the bases for the spaces of global sections adapted to a straight line,
so that $V$ becomes diagonal along that line, and uses essentially the condition $h^0(\cE(-1))=h^1(\cE(-1))=0$.

It is quite straightforward that if $\cE$ satisfies 
$(\cE(-1))^* \otimes \cK_{\cV_P} \cong (\cE(-1))^\tau \cong \cE(-1)$ 
we obtain a real symmetric determinantal representation
(at least in the case $\cV_P(\RR)\neq\emptyset$, since we really need $\cE$ to be defined over $\RR$), 
whereas if $(\cE(-1))^* \otimes \cK_{\cV_P} \cong \cE(-1)^\tau$ we obtain a self-adjoint determinantal representation.

\subsection{} \label{subsec:divisors}

The above procedure can be written down more explicitly in terms of divisors and linear systems. 
We recall that for a homogeneous polynomial $F \in \CC[X_0,X_1,X_2]$,
the divisor $(F)$ of $F$ on $\cV_P$ is the formal sum of the zeroes of $F$ on $\cV_P$ with the orders
of the zeroes as coefficients 
(the order of the zero equals also the intersection multiplicity of the curves $\cV_Q$ and $\cV_P$ ---
here $Q$ can have multiple irreducible factors so that the curve $\cV_Q$ can have multiple components, i.e.,
it may be a non-reduced subscheme of $\PP^2$ over $\CC$).
 
Let $Q \in \CC[X_0,X_1,X_2]$ be an auxilliary homogeneous polynomial of degree $m-1$,
together with a decomposition 
$(Q) = D + D_\ell$, $\deg D = \deg D_\ell = m(m-1)/2$.
We assume that $D$ and $D_\ell$ satisfy the condition that $D-(L)$ or equivalently $D_\ell-(L)$ is not linearly equivalent
to an effective divisor on $\cV_P$, where $L$ is a linear form.

Take a basis $\{V_{11},\ldots,V_{m1}\}$ of the vector space of homogeneous polynomials of degree $m-1$
that vanish on $D$, with $V_{11}=Q$,
and a basis $\{V_{11},\ldots,V_{1m}\}$ of the vector space of homogeneous polynomials of degree $m-1$
that vanish on $D_\ell$. Write $(V_{i1}) = D + D_{\ell,i}$
and $(V_{1j}) = D_j + D_\ell$, where $D_1=D$ and $D_{\ell,1} = D_\ell$.
Define homogeneous polynomials $V_{ij}$ of degree $m-1$ for $i>1$ and $j>1$ by
$(V_{ij}) = D_j+D_{\ell,i}$. We then set $V=\left[V_{ij}\right]_{i,j=1,\ldots,m}$,
and obtain a determinantal representation $U$ of $P$ by \eqref{eq:adjadj}.

To be able to obtain a real symmetric determinantal representation of a polynomial $P$ over $\RR$,
we need $\cV_Q$ to be 
a real contact curve of $\cV_P$, i.e., to be defined by a polynomial $Q$ over $\RR$ and to have  
even intersection multiplicity at all points of intersection (in this case $D = D_\ell$ is uniquely determined).
To be able to obtain a self-adjoint determinantal representation we need $\cV_Q$ to be 
a real curve that is contact to $\cV_P$ at all real points of intersection
(in this case the real points of $D$ and of $D_\ell = D^\tau$ are uniquely determined whereas the non-real points
can be shuffled between the two).

\subsection{} \label{subsec:adjmatrix_positivity}

Unlike the approach of Section \ref{subsec:theta}, the approach of Sections \ref{subsec:adjmatrix}--\ref{subsec:divisors} 
does not produce directly the coefficient matrices of the determinantal representation,
so it is not clear a priori how to obtain a real symmetric or self-adjoint representation that is positive.
A delicate calculation with differentials carried out in \cite[Sections 4--6]{Vin93} shows that
that this will happen exactly in case the original polynomial $p$
is $RZ$ and $\cE(-1)$ (more precisely, its image under the Abel--Jacobi map) belongs to 
the distinguished real $g$-dimensional torus $T_0$ in the Jacobian of $\cV_P$.
We will obtain a corresponding result in terms of the auxilliary curve $\cV_Q$
in Section \ref{sec:interlace} below by elementary methods.

\subsection{} \label{subsec:sing}

We consider now how to relax the assumption that $\cV_P$ is irreducible and smooth.
A full analysis of determinantal representations for a general reduced polynomial $P$
involves torsion free sheaves of rank 1 on a possibly reducible and singular curve;
see \cite{KV1} and the references therein. However one can get far enough to obtain 
a full proof of Theorem \ref{thm:2d} by considering a restricted class of determinantal representations.

Let $\nu \colon \tcV_P \to \cV_P$ be the normalization or equivalently the desingularization. 
$\tcV_P$ is a disjoint union of smooth complex projective curves (or compact Riemann surfaces)
corresponding to the irreducible factors of $P$ (the irreducible components of $\cV_P$)
and 
$$
\nu\left|_{\tcV_P \setminus \nu^{-1}((\cV_P)_\sing)}\right. \colon 
\tcV_P \setminus \nu^{-1}((\cV_P)_\sing) \to \cV_P \setminus (\cV_P)_\sing
$$
is a (biregular or complex analytic) isomorphism, where $(\cV_P)_\sing$ denotes the set of singular points of $\cV_P$.
Let $\lambda \in (\cV_P)_\sing$; we assume that $\lambda$ lies in the affine plane $\CC^2 \subseteq \PP^2(\CC)$
(otherwise we just choose different affine coordinates near $\lambda$).
For every $\mu \in \nu^{-1}(\lambda)$ (i.e., for every branch of $\cV_P$ at $\lambda$),
the differential 
$$
\nu^* \left(\frac{dx_1}{\partial p/\partial x_2}\right) = - \nu^*\left(\frac{dx_2}{\partial p/\partial x_1}\right)
$$ 
on $\tcV_P$ has a pole at $\mu$; we denote the order of the pole by $m_\mu$.
We define
$$
\Delta_\lambda = \sum_{\mu \in \nu^{-1}(\lambda)} m_\mu \mu
$$
(the adjoint divisor of $\lambda$), and
\begin{equation} \label{eq:adjoint_div}
\Delta = \sum_{\lambda \in (\cV_P)_\sing} \Delta_\lambda
\end{equation}
(the adjoint divisor, or the divisor of singularities, of $\cV_P$); see, e.g., \cite[Appendix A2]{ACGH85}.

A determinantal representation $U$ of $P$ is called {\em fully saturated}
(or $\tcV_P/\cV_P$ saturated) if all the entries of the adjoint matrix $V$ vanish on 
the adjoint divisor: $(\nu^* V_{ij}) \geq \Delta$ for all $i,j=1,\ldots,m$.
This is a somewhat stronger condition than being a {\em maximal} (or {\em maximally generated})
determinantal representation, which means that for every $\lambda \in (\cV_P)_\sing$,
$\dim \ker U(\lambda)$ has the maximal possible dimension equal to the multiplicity of $\lambda$ on $\cV_P$.
We refer to \cite{KV1,KV2} for details. If $P$ is reducible than a fully saturated determinantal representation
always decomposes, up to equivalence, as a direct sum of determinantal representations of 
the irreducible factors of $P$;
hence we can assume that $P$ is irreducible.

For a fully saturated determinantal representation $U$ of $P$, we can define a line bundle $\tcL$ on 
$\tcV_P \setminus \nu^{-1}((\cV_P)_\sing)$ with fibres $\tcL([X]) = \ker U(X)$ and then extend it uniquely
to all of $\tcV_P$; we then define $\tcE = \tcL(m-1)(-\Delta)$, see \cite{BV96} --- here
$\tcL(m-1) = \tcL \otimes \nu^* \cO_{\cV_P}(m-1)$. Alternatively, we can define $\tcE = \nu^* \cE$,
where the sheaf $\cE$ on $\cV_P$ is still defined by \eqref{eq:exact_seq}, see \cite{KV1}.
We introduce similarly the left kernel line bundle $\tcE_\ell$.
Most of Sections \ref{subsec:sheaves}--\ref{subsec:adjmatrix} and \ref{subsec:adjmatrix_positivity} now carry over 
for a fully saturated determinantal representation $U$ of $P$ and line bundles $\tcE$ and $\tcE_\ell$
on $\tcV_P$; notice that the canonical line bundle on $\tcV_P$ is given by
$\cK_{\tcV_P} \cong \nu^* \cO_{\cV_P}(m-3)(-\Delta)$.
 
In Section \ref{subsec:divisors}, we have to take the auxilliary polynomial $Q$ 
to vanish on the adjoint divisor:
$(\nu^* Q) \geq \Delta$, with a decomposition $(\nu^* Q) = D + D_\ell + \Delta$.
We then take a basis $\{V_{11},\ldots,V_{m1}\}$ of the vector space of homogeneous polynomials of degree $m-1$
that vanish on $D$ and on the adjoint divisor, with $V_{11}=Q$,
and a basis $\{V_{11},\ldots,V_{1m}\}$ of the vector space of homogeneous polynomials of degree $m-1$
that vanish on $D_\ell$ and on the adjoint divisor; 
we write $(V_{i1}) = D + D_{\ell,i} + \Delta$
and $(V_{1j}) = D_j + D_\ell + \Delta$, where $D_1=D$ and $D_{\ell,1} = D_\ell$;
and we define homogeneous polynomials $V_{ij}$ of degree $m-1$ for $i>1$ and $j>1$ by
$(V_{ij}) = D_j+D_{\ell,i}+\Delta$.

\subsection{} \label{subsec:kerner}

The recent work \cite{KV1} extends the construction of the adjoint matrix of a determinantal representation
outlined in Section \ref{subsec:adjmatrix} to the most general higher dimensional situation.
Let $P=P_1^{r_1} \cdots P_k^{r_k} \in \CC[X_0,X_1,\ldots,X_d]$, 
where $P_1,\ldots,P_k$ are (distinct) irreducible polynomials,
and let 
\begin{equation} \label{eq:proj_hyper_scheme}
\cV_P = \Proj \CC[X_0,X_1,\ldots,X_d] / \langle P \rangle
\end{equation} 
be the corresponding closed subscheme of $\PP^n$ over $\CC$; of course $\cV_P$ is in general highly non-reduced. 
Let $U$ be a determinantal representation of $P$ as in \eqref{proj_detrep_P}--\eqref{eq:U_and_V};
we define the kernel sheaf $\cE$ on $\cV_P$ by the exact sequence \eqref{eq:exact_seq}, as before.
$\cE$ is a torsion-free sheaf on $\cV_P$ of multirank $(r_1,\ldots,r_k)$ 
(these notions have to be somewhat carefully defined), and
we have 
\begin{equation} \label{eq:vanishing}
h^0(\cE(-1))=h^{d-1}(\cE(1-d))=0,\quad h^i(\cE(j))=0,\,i=1,\ldots,d-2,\,j\in\ZZ.
\end{equation}
Conversely, 

\begin{thm} \label{thm:kv}
Let $\cE$ be a torsion-free sheaf on $\cV_P$ of multirank $(r_1,\ldots,r_k)$
satisfying the vanishing conditions \eqref{eq:vanishing}; 
then $\cE$ is the kernel sheaf of a determinantal representation
of $P$.
\end{thm}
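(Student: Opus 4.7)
The plan is to mirror the plane-curve construction of Section \ref{subsec:adjmatrix} by building the determinantal representation from its adjoint matrix. The main ingredients are a Serre-type dual $\cE_\ell$ of $\cE$ together with a pairing $\cE \otimes_{\cO_{\cV_P}} \cE_\ell \to \cO_{\cV_P}(m-1)$ whose matrix in bases of global sections will play the role of the adjoint matrix $V$. Using the adjunction isomorphism $\omega_{\cV_P} \cong \cO_{\cV_P}(m-d-1)$ for the (possibly non-reduced) hypersurface $\cV_P$, together with the intermediate vanishings $h^i(\cE(j))=0$ for $1 \le i \le d-2$ (which express that $\cE$ is, locally, a Cohen--Macaulay module of codimension one), I would define
$$\cE_\ell := \mathcal{E}xt^{d-1}_{\cO_{\PP^d}}\bigl(\cE,\,\cO_{\PP^d}(-d-1)\bigr)(m),$$
show it is torsion-free of the same multirank, satisfies the analogue of \eqref{eq:vanishing}, and is canonically paired with $\cE$ into $\cO_{\cV_P}(m-1)$ via the Yoneda product.

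Second, a Riemann--Roch / Euler-characteristic computation based on the short exact sequence $0\to\cO_{\PP^d}(-m)\to\cO_{\PP^d}\to\cO_{\cV_P}\to 0$ and the vanishings in \eqref{eq:vanishing} should yield $h^0(\cE)=h^0(\cE_\ell)=m$. Choosing bases $\{F_1,\ldots,F_m\}$ of $H^0(\cE)$ and $\{G_1,\ldots,G_m\}$ of $H^0(\cE_\ell)$ and pairing them produces an $m\times m$ matrix $V=[V_{ij}]$ of homogeneous forms of degree $m-1$ on $\PP^d$. Since the pairing has rank one at a general smooth point of each irreducible component of $\cV_P$, the matrix $V$ has rank one on $\cV_P$, so every entry of $\adj V$ is divisible by $P^{m-2}$; I would then define a matrix $U$ of linear forms by $\adj V = P^{m-2}\cdot U$. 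The standard identities $V\cdot\adj V=(\det V)I$ and $\adj\adj V=(\det V)^{m-2}V$ would force $\det V = c\,P^{m-1}$ and $\det U = c^m\,P$ for a single constant $c$; identifying the kernel sheaf of $U$ with $\cE$ via the exact sequence \eqref{eq:exact_seq} is then immediate from the way the columns $F_j$ of $V$ were constructed.

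The main obstacle is showing $c\ne 0$, i.e., $\det V\not\equiv 0$. In the smooth irreducible plane-curve case of Section \ref{subsec:adjmatrix} this is handled by restricting to a generic line on which $V$ can be made diagonal. In the general higher-dimensional, reducible, and non-reduced setting the argument is subtler: I would restrict to a generic line $L\cong\PP^1$ meeting $\cV_P$ in the expected scheme-theoretic intersection of length $m$, use the intermediate vanishings $h^i(\cE(j))=0$ for $1\le i\le d-2$ together with successive hyperplane sections to prove that $\cE|_L$ decomposes as a direct sum of multirank-weighted skyscraper sheaves of total length $m$, and then pick bases of $H^0(\cE)$ and $H^0(\cE_\ell)$ that restrict to mutually dual bases of those skyscrapers, so that $V|_L$ is diagonal with nonzero diagonal entries. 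The most delicate bookkeeping occurs along components with $r_j>1$, where the non-reduced structure forces one to control the pairing on the corresponding thickened fibres; this is precisely where the full strength of the vanishings \eqref{eq:vanishing}, in the symmetric form shared by $\cE$ and $\cE_\ell$, is needed.
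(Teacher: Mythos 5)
Your plan follows the same route the paper sketches for Theorem \ref{thm:kv}: define a dual sheaf $\cE_\ell$, pair bases of $H^0(\cE)$ and $H^0(\cE_\ell)$ to build a matrix $V$ of degree-$(m-1)$ forms that should be the adjoint matrix, recover $U$ via $\adj V = P^{m-2}U$, and establish $\det V \not\equiv 0$ by restricting to a generic line and diagonalizing $V$ there. This is exactly the outline the paper gives (with the full technical details deferred to [KV1]), and your remarks about the thickened fibres over components with $r_j>1$ and the use of the intermediate vanishings correctly identify the delicate points.

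Two formula corrections are needed, and one of them is not merely cosmetic. A hypersurface $\cV_P \subset \PP^d$ has codimension $1$, so for a maximal Cohen--Macaulay sheaf supported on it the only nonvanishing local $\mathcal{E}xt$ against $\omega_{\PP^d}$ is $\mathcal{E}xt^{1}_{\cO_{\PP^d}}$, not $\mathcal{E}xt^{d-1}_{\cO_{\PP^d}}$; these agree only for $d=2$, and for $d>2$ the sheaf you wrote down is identically zero. Likewise the twist making the pairing land in $\cO_{\cV_P}(m-1)$ is $(d)$, not $(m)$: since $\bomega_{\cV_P} \cong \cO_{\cV_P}(m-d-1)$, the correct definition is $\cE_\ell = \cE^*\otimes\bomega_{\cV_P}(d) \cong \mathcal{E}xt^{1}_{\cO_{\PP^d}}\bigl(\cE,\cO_{\PP^d}(-d-1)\bigr)(d)$, which is what the paper states. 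Finally, taking determinants in $\adj V = P^{m-2}U$ together with $\det V = cP^{m-1}$ gives $P^{m(m-2)}\det U = c^{m-1}P^{(m-1)^2}$, hence $\det U = c^{m-1}P$ rather than $c^{m}P$. With these repairs your sketch is a faithful rendering of the paper's approach.
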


As in Section \ref{subsec:adjmatrix}, Theorem \ref{thm:kv} is proved by taking bases of
$H^0(\cE,\cV_P)$ and of $H^0(\cE_\ell,\cV_P)$, $\cE_\ell = \cE^* \otimes \bomega_{\cV_P}(d)$
(here $\bomega_{\cV_P} = \cO_{\cV_P}(m-d-1)$ is the dualizing sheaf),
pairing these bases to construct a matrix $V$ of homogeneous polynomials of degree $m-1$,
and then defining the determinantal representation $U$ by \eqref{eq:adjadj};
there are quite a few technicalities, especially because the scheme is non-reduced. 
For $P$ a polynomial over $\RR$, the determinantal representation can be taken to be self-adjoint if (and only if)
$\cE^\tau \cong \cE^* \otimes \bomega_{\cV_P}(d)$ where $\tau$ is again the complex conjugation.
It should be also possible to characterize real symmetric determinantal representations.
(Complex symmetric determinantal representations correspond to $\cE \cong \cE^* \otimes \bomega_{\cV_P}(d)$.)

Theorem \ref{thm:kv} provides a new venue for pursuing Conjecture \ref{conj:gend}. To make it effective requires
progress in two directions:
\begin{enumerate}
\item
Given a reduced homogeneous polynomial $P$, characterize large classes of homogeneous polynomials $\tilde H$
such that the scheme $\cV_{P \tilde H}$ admits torsion free sheaves of correct multirank satisfying
the vanishing conditions \eqref{eq:vanishing}.
\item
If $p$ is $RZ$, characterize positive real symmetric or self-adjoint determinantal representations of $P$ in terms
of the kernel sheaf $\cE$. This is interesting not only for the general conjecture but also for special cases,
compare the recent paper \cite{DI1} dealing with
singular nodal quartic surfaces in $\PP^3$. It could be that the results of Section \ref{sec:interlace} below
admit some kind of a generalization.
\end{enumerate}

\section{Interlacing $RZ$ polynomials and positive self-adjoint determinantal representations}
\label{sec:interlace}

\subsection{} \label{subsec:def_interlace}

Let $p \in \RR[x_1,\ldots,x_d]$ be a reduced (i.e., without multiple factors) $RZ_{x^0}$ polynomial of degree $m$
with $p(x^0) \neq 0$,
and let $P$ be the homogenization of $p$ (see \eqref{eq:project}). 
Let $Q \in \RR[X_0,X_1,\ldots,X_d]$ be a homogeneous polynomial of degree $m-1$ that is relatively prime with $P$.
We say that $Q$ {\em interlaces} $P$ if for a general $X \in \RR^{d+1}$, there is a zero of
the univariate polynomial $Q(X + sX^0)$ in the open interval
between any two zeroes of the univariate polynomial $P(X + sX^0)$, where $X^0=(1,x^0)$.
Alternatively, for any $X \in \RR^{d+1}$, 
\begin{equation} \label{eq:alternate}
s_1 \leq s_1' \leq s_2 \leq \cdots \leq s_{m-1} \leq s_{m-1}' \leq s_m,
\end{equation}
where $s_1,\ldots,s_m$ are the zeroes of $P(X + sX^0)$ and $s_1',\ldots,s_{m-1}'$ are zeroes of $Q(X + sX^0)$,
counting multiplicities.
Notice (see \eqref{eq:projvsaffine_onaline}) that we can consider instead the zeroes of the univariate polynomials
$\check p_x(t) = t^m p(x_0 + t^{-1}x)$ and 
$\check \qut{}_x(t) = t^{m-1} \qut(x_0 + t^{-1}x)$ for a general or for any $x \in \RR^d$,
where $\qut(x)=Q(1,x_1,\ldots,x_d)$.
It follows that $\qut$ is a $RZ_{x^0}$ polynomial with $\qut(x^0) \neq 0$,
and (upon normalizing $p(x^0)>0$, $\qut(x^0)>0$)
the closure of the connected component of $x^0$ in 
$\{x \in {\mathbb R}^d \colon \qut(x) > 0\}$ contains the closure of the connected component of $x^0$ in 
$\{x \in {\mathbb R}^d \colon p(x) > 0\}$.
The degree of $\qut$ is either $m-1$ (in which case $Q$ is the homogenization of $\qut$)
or $m-2$ (in which case $Q$ is the homogenization of $\qut$ times $X_0$).

Geometrically, let $\yashar$ be a general straight line through $[X^0]$ in $\PP^d(\RR)$.
Then $Q$ interlaces $P$ if and only if any there is an intersection of $\yashar$ with the real projective hypersurface
$\cV_Q(\RR)$ in any open interval on $\yashar \setminus \{[X^0]\}$ between two intersections
of $\yashar$ with the real projective hypersurface $\cV_P(\RR)$.
If $Q$ does not contain $X_0$ as a factor, we can consider instead of $\yashar \setminus \{[X^0]\}$
the two open rays $\yashar_\pm$ starting at $x^0$
of a general straight line through $x^0$ in $\RR^d$ and their intersections with the real affine hypersurfaces
$\cV_\qut(\RR)$ and $\cV_p(\RR)$. 

An example of a polynomial $Q$ interlacing $P$ is the first
directional derivative $P^{(1)}_{x^0}$, see \eqref{eq:renegar_der} (in this case $\qut=p^{(1)}_{x^0}$
is the first Renegar derivative).

It is not hard to see that (upon normalizing $p(x^0)>0$)
the definition of interlacing is independent of the choice of a point $x^0$ in a rigidly convex algebraic interior
with a minimal defining polynomial $p$.
In case the real projective hypersurfaces $\cV_P(\RR)$ and $\cV_Q(\RR)$ are both smooth, the interlacing of polynomials
simply means the interlacing of ovaloids, see Proposition \ref{prop:oval}.
More precisely, in this case $Q$ interlaces $P$ if and only if
\begin{itemize}
\item[a.]
If $m=2k$ is even and $\cV_P(\RR) = W_1 \coprod \cdots \coprod W_k$ and $\cV_Q(\RR)=W_1' \coprod \cdots \coprod W_k'$
are the decompositions into connected components,
then the ovaloid 
$W_i'$ is contained in the ``shell'' obtained by removing the interior of 
the ovaloid $W_i$ from the closure of the interior of the ovaloid $W_{i+1}$, $i=1,\ldots,k-1$,
and the pseudo-hyperplane $W_k'$ is contained in the closure of the exterior of the ovaloid $W_k$;
\item[b.]
If $m=2k+1$ is odd and $\cV_P(\RR) = W_1 \coprod \cdots \coprod W_k \coprod W_{k+1}$ and 
$\cV_Q(\RR)=W_1' \coprod \cdots \coprod W_k'$
are the decompositions into connected components,
then the ovaloid 
$W_i'$ is contained in the ``shell'' obtained by removing the interior of 
the ovaloid $W_i$ from the closure of the interior of the ovaloid $W_{i+1}$, $i=1,\ldots,k-1$,
and the ovaloid $W_k'$ is contained in the closure of the exterior of the ovaloid $W_k$
and the pseudo-hyperplane $W_{k+1}$ is contained in the closure of the exterior of $W_k'$.
\end{itemize}

Interlacing can be tested via the Bezoutiant, similarly to testing the $RZ$ condition via
the Hermite matrix. 
For polynomials $f, g \in \RR[t]$ with $f$ of degree $m$ and $g$ of degree at most $m$, 
we define the Bezoutiant of $f$ and $g$, $B(f,g)=\left[b_{ij}\right]_{i,j=1,\ldots,m}$,
by the identity
$$
\frac{f(t)g(s)-f(s)g(t)}{t-s} = \sum_{i,j=0}^{m-1} b_{ij}t^is^j;
$$
notice that the entries of $B(f,g)$ are polynomials in the coefficients of $f$ and of $g$. 
The nullity of $B(f,g)$ equals the number of common zeroes of $f$ and of $g$
(counting multiplicities), and (assuming that the degree of $g$ is at most $m-1$),
$B(f,g)>0$ if and only if $f$ has only real and distinct zeroes and  
there is a zero of $g$ in the open interval
between any two zeroes of $f$; see, e.g., \cite{KN36}.
Given $p \in \RR[x_1,\ldots,x_d]$ a reduced polynomial of degree $m$ with $p(x^0) \neq 0$, with homogenization $P$,
and $Q \in \RR[X_0,X_1,\ldots,X_d]$ a homogeneous polynomial of degree $m-1$
that is relatively prime with $P$, we now consider $B(\check p_x,\check \qut{}_x)$,
where $\check p_x$, $\check \qut{}_x$ are as before;
it is a polynomial matrix that we call the Bezoutiant of $P$ and $Q$ with respect to $x^0$
and denote $B(P,Q;x^0)$.
We see that $p$ is a $RZ_{x^0}$ polynomial and $Q$ interlaces $P$ if and only if 
$B(P,Q;x^0)(x) \geq 0$ for all $x \in \RR^d$.

\subsection{}

Before stating and proving the main result of this section, we make some preliminary observations.

Let $P \in \CC[X_0,X_1,\ldots,X_d]$ be a reduced homogeneous polynomial of degree $m$ with the corresponding
complex projective hypersurface $\cV_P$ (see \eqref{eq:proj_hyper}, and let $U$ be a determinantal representation
of $P$ with the adjoint matrix $V$ as in \eqref{eq:U_and_V}. Since $\dim \ker U(X) = 1$ for a general point $[X]$
of any irreducible component of $\cV_P$, the rows of $V$ are proportional along $\cV_P$ and so are the columns.
An immediate consequence is that no element of $V$ can vanish along $\cV_P$: otherwise, 
because of the proportionality of the rows, a whole row or a whole column of $V$ would vanish along $\cV_P$,
hence be divisible by $P$, hence be identically $0$ (since all the elements have 
degree $m-1$ which is less than the degree of $P$),
implying that $\det V$ is identically $0$, a contradiction.
Another consequence is that every minor of order $2$ in $V$, $V_{ij}V_{kl} - V_{kj} V_{il}$, vanishes along $\cV_P$.

\begin{lem} \label{lem:pairing}
Let $F_j = \left[V_{ij}\right]_{i=1,\ldots,m}$, $j=1,\ldots,m$, and $G_i = \left[V_{ij}\right]_{j=1,\ldots,m}$,
$i=1,\ldots,m$, be the columns and the rows of the adjoint matrix $V$, respectively,
let $X^0=(X^0_0,X^0_1,\ldots,X^0_d) \in \CC^{d+1} \setminus \{0\}$,
and let 
\begin{equation} \label{eq:dirder}
P'_{X^0}(X) = \frac{d}{ds} \left. P(X + s X^0) \right|_{s=0} 
= \sum_{\alpha=0}^d X^0_\alpha \, \frac{\partial P}{\partial X_\alpha}(X)
\end{equation}
be the directional derivative.
Then
\begin{equation} \label{eq:pairing}
G_i \, U(X^0) \, F_j = V_{ij} \, P'_{X^0}
\end{equation}
along $\cV_P$.
\end{lem}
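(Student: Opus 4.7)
The plan is to differentiate the fundamental adjugate identity $U(X) V(X) = V(X) U(X) = P(X) I$ in the direction $X^0$ and then use that $P$ vanishes on $\cV_P$.

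More precisely, since the entries of $U$ are linear forms in $X$, the directional derivative of $U$ in the direction $X^0$ is simply the constant matrix $U(X^0)$. Differentiating the identity $U(X) V(X) = P(X) I$ with respect to $s$ along the line $X + sX^0$ at $s=0$ therefore gives
\begin{equation*}
U(X^0)\,V(X) + U(X)\,V'_{X^0}(X) = P'_{X^0}(X)\,I,
\end{equation*}
where $V'_{X^0}$ denotes the directional derivative of $V$ in the direction $X^0$ (analogous to \eqref{eq:dirder}).

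Next, I would multiply this identity on the left by $V(X)$, obtaining
\begin{equation*}
V(X)\,U(X^0)\,V(X) + V(X)\,U(X)\,V'_{X^0}(X) = P'_{X^0}(X)\,V(X).
\end{equation*}
Using the companion identity $V(X)\,U(X) = P(X)\,I$, the second term on the left becomes $P(X)\,V'_{X^0}(X)$, which vanishes along $\cV_P$. Hence on $\cV_P$ we have
\begin{equation*}
V(X)\,U(X^0)\,V(X) = P'_{X^0}(X)\,V(X).
\end{equation*}

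Finally, reading off the $(i,j)$ entry of this matrix identity gives exactly $G_i\,U(X^0)\,F_j = V_{ij}\,P'_{X^0}$ along $\cV_P$, since $G_i$ is the $i$th row and $F_j$ is the $j$th column of $V$, so that $G_i\,M\,F_j = (V M V)_{ij}$ for any matrix $M$. There is no serious obstacle here; the only point worth checking is that the linearity of $U$ in $X$ is what makes the directional derivative $U(X^0)$ appear without any correction term, and that one must use the two-sided adjugate identity (both $UV = PI$ and $VU = PI$) in order to reduce $VU(X^0)V$ to a scalar multiple of $V$ modulo $P$.
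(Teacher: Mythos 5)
Your proof is correct, and it takes a slightly different route than the paper. The paper's proof substitutes the Jacobi-type formula \eqref{eq:derdetrep} (namely $\partial P/\partial X_\alpha = \sum_{l,k} A_{\alpha,lk}V_{kl}$) into \eqref{eq:dirder} to get $P'_{X^0} = \sum_{l,k}U(X^0)_{lk}V_{kl}$, and then invokes the vanishing of the $2\times 2$ minors $V_{ij}V_{kl}-V_{il}V_{kj}$ along $\cV_P$ (established in the preceding paragraph from the rank-one property of $V$ on $\cV_P$) to rewrite $V_{ij}\sum_{l,k}U(X^0)_{lk}V_{kl}$ as $\sum_{l,k}V_{il}U(X^0)_{lk}V_{kj} = G_i\,U(X^0)\,F_j$. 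Your argument instead differentiates the two-sided adjugate identity $UV=VU=PI$ along the line through $X^0$, left-multiplies by $V$, and reduces modulo $P$, yielding the full matrix identity $V\,U(X^0)\,V = P'_{X^0}\,V$ on $\cV_P$ without ever appealing explicitly to the rank-one structure of $V$ or the $2\times 2$-minor vanishing. The two computations are, of course, closely related (the minor vanishing and the Jacobi formula both flow from the adjugate identity), but yours is a clean, self-contained matrix-level derivation, and it delivers the slightly stronger statement $V\,U(X^0)\,V = P'_{X^0}\,V$ directly, from which \eqref{eq:pairing} is just the $(i,j)$ entry. One small point worth keeping in mind: the lemma is stated for $P$ reduced but possibly reducible or singular, so $\cV_P$ need not be a variety on which rank-one considerations apply pointwise; your route sidesteps that entirely, since the identity $V\,U(X^0)\,V + PV'_{X^0} = P'_{X^0}V$ holds identically in $X$ and one simply sets $P=0$.
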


The result follows immediately by substituting \eqref{eq:derdetrep} into \eqref{eq:dirder}
to calculate the directional derivative in terms of the entries of the adjoint matrix and of the coefficient
matrices of the determinantal representation, and using the vanishing of the minors of order $2$ in $V$
along $\cV_P$. A version of \eqref{eq:pairing} was established in \cite[Corollary 5.8]{Vin93} in case $d=2$
and $\cV_P$ is smooth (the proof given there works verbatim for fully saturated determinantal representations,
see Section \ref{subsec:sing},
when $\cV_P$ is possibly singular and / or reducible) using essentially the pairing between the kernel
and the left kernel alluded to in Section \ref{subsec:theta}.

Assume now that the dehomogenization $p(x_1,\ldots,x_d)=P(1,x_1,\ldots,x_d)$ is a $RZ_{x^0}$ polynomial
with $p(x^0) \neq 0$,
let $X^0=(1,x^0)$, and let $U$ be a self-adjoint determinantal representation.
Let $\yashar$ be a straight line through $[X^0]$ in $\PP^d(\RR)$ intersecting $\cV_P(\RR)$ in $m$
distinct points $[X^1],\ldots,[X^m]$. Then we have

\begin{lem} \label{lem:intersect}
$U(X^0) > 0$ if and only if the compression of $U(X^0)$ to $\ker U(X^i)$ is positive definite for $i=1,\ldots,m$.
\end{lem}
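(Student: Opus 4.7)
The forward direction is immediate, since the restriction of a positive definite Hermitian form to any subspace is positive definite. For the converse, my plan is to parameterize $\yashar = \{[X^0 + sY] : s \in \RR\}$ for a direction $Y$ chosen so that the intersection points are $X^k = X^0 + s_k Y$ at distinct finite real values $s_1,\ldots,s_m$. By linearity of $U$,
$$
U(X^0 + sY) = U(X^0) + s\, U(Y)
$$
is a self-adjoint matrix pencil whose determinant $P(X^0+sY)$ is a univariate polynomial with $m$ distinct real roots $s_1,\ldots,s_m$. Hence this polynomial has degree exactly $m$, and the nonvanishing of its constant term $P(X^0)=p(x^0)$ and its leading coefficient $\det U(Y)$ shows that both $U(X^0)$ and $U(Y)$ are invertible. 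Moreover, distinctness of the $m$ intersections forces each to be transverse, so each $[X^k]$ is a smooth point of $\cV_P$ and $\ker U(X^k)$ is one-dimensional; let $v_k$ span it.

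The crux of the argument will be to show that $\{v_1,\ldots,v_m\}$ is a basis of $\CC^m$ which is orthogonal with respect to the Hermitian form $U(X^0)$. Linear independence I would obtain by rewriting the kernel condition as the standard eigenvalue problem $U(Y)^{-1} U(X^0) v_k = -s_k v_k$, whose eigenvalues $-s_k$ are distinct. The $U(X^0)$-orthogonality will follow from the self-adjointness of the pencil: starting from $U(X^0) v_k = -s_k U(Y) v_k$ and its conjugate transpose applied on the left, I would compute $v_l^* U(X^0) v_k$ in two ways, obtaining
$$
-s_k\, v_l^* U(Y) v_k \;=\; v_l^* U(X^0) v_k \;=\; -s_l\, v_l^* U(Y) v_k,
$$
so $(s_k - s_l)\, v_l^* U(Y) v_k = 0$, yielding $v_l^* U(X^0) v_k = 0$ for $l \neq k$.

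In the basis $\{v_k\}$ the Hermitian form $U(X^0)$ is therefore diagonal with entries $v_k^* U(X^0) v_k$, and since $\ker U(X^k)$ is spanned by $v_k$, positivity of the compression of $U(X^0)$ to $\ker U(X^k)$ is precisely positivity of $v_k^* U(X^0) v_k$. Thus $U(X^0) > 0$ if and only if all $m$ compressions are positive definite. The main point to be careful about is justifying regularity of the pencil and one-dimensionality of the kernels at the $X^k$, both of which follow cleanly from the $m$ intersection points being distinct together with $p(x^0) \neq 0$; the remainder is elementary theory of self-adjoint pencils. As a side remark, Lemma \ref{lem:pairing} offers an alternate route to the diagonal entries: writing $V(X^k) = c_k v_k v_k^*$ with $c_k \in \RR$ (since $V(X^k)$ is rank one and self-adjoint), the pairing identity evaluated at $X^k$ yields $c_k\, v_k^* U(X^0) v_k = P'_{X^0}(X^k)$, expressing each compression explicitly in terms of the directional derivative of $P$.
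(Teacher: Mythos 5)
Your proof is correct and follows essentially the same route as the paper: you reduce to the self-adjoint generalized eigenvalue problem on the line, use distinctness of the $m$ real eigenvalues to get a basis of kernel vectors, derive their $U(X^0)$-orthogonality from the Hermitian pencil relation, and read off positivity diagonally. The only cosmetic difference is the parameterization of $\yashar$ (you omit $[Y]$ while the paper omits $[X^0]$, which is automatically safe since $p(x^0)\neq 0$); your closing remark via Lemma~\ref{lem:pairing} is also consistent with how the paper uses it in Theorem~\ref{thm:interlace}.
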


This is just a special case of \cite[Proposition 5.5]{Vin93}: the statement there is for $d=2$ but the proof
for general $d$ is exactly the same 
(it ammounts to restricting the determinantal representation $U$ to the straight line $\yashar$,
and looking at the canonical form of the resulting hermitian matrix pencil).
We give a direct argument in our situation.
 
\begin{proof}[Proof of Lemma \ref{lem:intersect}]
Choose $X \in \RR^{d+1}$ so that $\yashar \setminus \{[X^0]\} = \{[X-sX^0]\}_{s \in \RR}$. Then $X^i=X-s_iX^0$,
where $s_i$, $i=1,\ldots,m$, are the zeroes of the univariate polynomial $P(X-sX^0)$, i.e.,
the eigenvalues of the generalized eigenvalue problem
$$
\left(U(X)-sU(X^0)\right) v = 0.
$$
The corresesponding eigenspaces are precisely $\ker U(X^i)$; since there are $m$ distinct eigenvalues, these
eigenspaces span all of ${\mathbb C}^m$,
$$
{\mathbb C}^m = \ker U(X^1) \dot + \cdots \dot + \ker U(X^m).
$$
The lemma now follows since the different eigenspaces are orthogonal with respect to $U(X^0)$:
if $v_i \in \ker U(X^i)$, $v_j \in \ker U(X^j)$, $i \neq j$, then
$$
s_i v_j^* U(X^0) v_i = v_j^* U(X) v_i = s_j v_j^* U(X^0) v_i
$$
(since $s_j \in \RR$), implying that $v_j^* U(X^0) v_i = 0$ (since $s_i \neq s_j$).
\end{proof}

We notice that Lemma \ref{lem:intersect} remains true for non-reduced polynomials $P$ provided the determinantal 
representation $U$ is {\em generically maximal} (or {\em generically maximally generated}) \cite{KV1}:
if $P=P_1^{r_1} \cdots P_k^{r_k}$, where $P_1,\ldots,P_k$ are distinct irreducible polynomials, this means that
that $\dim \ker U(X) = r_i$ at a general point $[X]$ of $\cV_{P_i}$, $i=1,\ldots,k$.
Since positive self-adjoint determinantal representations are always generically maximal, this may open the possibility
of generalizing Theorem \ref{thm:interlace} below to the non-reduced setting.

\begin{thm} \label{thm:interlace}
Let $p \in \RR[x_1,\ldots,x_d]$ be an irreducible $RZ_{x^0}$ polynomial of degree $m$ with $p(x^0) \neq 0$,
let $P$ be the homogenization of $p$, and let $X^0=(1,x^0)$. 
Let $U$ be a self-adjoint determinantal representation of $P$ with adjoint matrix $V$,
as in \eqref{eq:U_and_V}. Then $U(X^0)$ is either positive or negative definite if and only if
the polynomial $V_{jj}$ interlaces $P$; here $j$ is any integer between $1$ and $m$.
\end{thm}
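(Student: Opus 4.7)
The strategy is to test the definiteness of $U(X^0)$ on a generic real line through $[X^0]$, identify the sign of the quadratic form on each kernel line $\ker U(X^i)$ with a product of values of $V_{jj}$ and a univariate derivative via Lemma~\ref{lem:pairing}, and then recognize the interlacing condition as the required sign alternation.

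Fix $j$. Choose a generic real line $\yashar$ through $[X^0]$, parameterized as $\{[Y + sX^0]\}_{s \in \RR}$ with $Y \in \RR^{d+1}$ independent from $X^0$. By the $RZ_{x^0}$ condition and genericity, $\yashar$ meets $\cV_P(\RR)$ in $m$ distinct smooth points $X^i = Y + s_i X^0$ with $s_1 < \cdots < s_m$, and $V_{jj}$ does not vanish at any of these points (since $V_{jj}$ has degree $m-1 < m$, it cannot vanish identically on the irreducible hypersurface $\cV_P$, so the vanishing locus $\cV_P \cap \cV_{V_{jj}}$ is a proper subvariety, which a generic $\yashar$ avoids along its $m$ intersections with $\cV_P$). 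Because $[X^i]$ is a smooth point of the irreducible $\cV_P$, the matrix $U(X^i)$ has rank $m-1$, so $\ker U(X^i)$ is one-dimensional and is spanned by the nonzero $j$-th column $F_j(X^i)$ of the adjoint. Self-adjointness of $V(X^i)$ at the real point $X^i$ gives $G_j(X^i) = F_j(X^i)^*$, so Lemma~\ref{lem:pairing} specializes at $X=X^i$ to
\[
F_j(X^i)^* U(X^0) F_j(X^i) \;=\; V_{jj}(X^i)\, P'_{X^0}(X^i).
\]

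With $f(s) = P(Y+sX^0)$ and $h(s) = V_{jj}(Y+sX^0)$, one checks directly that $P'_{X^0}(X^i) = f'(s_i)$; since $f$ has $m$ distinct simple real roots $s_1 < \cdots < s_m$, the values $f'(s_i)$ alternate strictly in sign. Lemma~\ref{lem:intersect} says that $U(X^0) > 0$ (resp.\ $U(X^0) < 0$) is equivalent to the compression $F_j(X^i)^* U(X^0) F_j(X^i)$ being positive (resp.\ negative) for every $i=1,\ldots,m$. Combining with the displayed identity, $U(X^0)$ is positive or negative definite if and only if $h(s_i) f'(s_i)$ has constant sign in $i$, which (by the alternation of $f'(s_i)$) is equivalent to $h(s_1),\ldots,h(s_m)$ alternating in sign.

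Finally, alternation of $h$ at the $s_i$ forces, by the intermediate value theorem, a zero of $h$ in each of the $m-1$ open intervals $(s_i, s_{i+1})$; since $\deg h \leq m-1$, these account for all zeros of $h$, and they strictly interlace the $s_i$. Conversely, strict interlacing on $\yashar$ immediately yields the alternation of $h$ at the $s_i$. Since this equivalence is established for a generic real line through $[X^0]$, it is exactly the interlacing of $V_{jj}$ and $P$ in the sense of Subsection~\ref{subsec:def_interlace}. The point that most needs care is the genericity bookkeeping: one must choose $\yashar$ to simultaneously avoid the singular locus of $\cV_P$, to meet $\cV_{V_{jj}}$ away from $\cV_P \cap \yashar$, and to ensure $f$ has simple roots—all codimension-one conditions, so dense open many lines work, and the (weak) interlacing of Subsection~\ref{subsec:def_interlace} is recovered from the strict interlacing on such lines by a standard closure argument.
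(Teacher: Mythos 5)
Your argument reproduces the paper's proof essentially verbatim: choose a generic line through $[X^0]$ meeting $\cV_P(\RR)$ in $m$ distinct points avoiding the zeros of $V_{jj}$, apply Lemma~\ref{lem:pairing} to rewrite the compression $F_j(X^i)^*U(X^0)F_j(X^i)$ as $V_{jj}(X^i)\,P'_{X^0}(X^i)$, invoke Lemma~\ref{lem:intersect} to reduce definiteness of $U(X^0)$ to sign conditions at the $X^i$, and then use Rolle/sign-alternation of $P'_{X^0}$ along the line to translate constant sign of the product into interlacing of $V_{jj}$. The only cosmetic difference is that you spell out the genericity bookkeeping and the identification $G_j(X^i)=F_j(X^i)^*$ explicitly, whereas the paper relies on the preliminary observations of Section~\ref{subsec:def_interlace} and the remark preceding Lemma~\ref{lem:pairing}; the substance is identical.
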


\begin{proof}
The fact that $U(X^0)>0$ implies the interlacing follows immediately 
from Cauchy's interlace theorem for eigenvalues of Hermitian matrices, see, e.g, \cite{Hw04}.
We provide a unified proof for both directions.

Let $\yashar$ be a straight line through $[X^0]$ in $\PP^d(\RR)$ intersecting $\cV_P(\RR)$ in $m$ distinct points
$[X^1],\ldots,[X^m]$ none of which is a zero of $V_{jj}$. Lemma \ref{lem:pairing} implies that for any $[X] \in \cV_P(\RR)$,
$$
F_j(X)^* \, U(X^0) \, F_j(X) = P'_{X^0}(X) \, V_{jj}(X).
$$
Lemma \ref{lem:intersect} then shows that $U(X^0)$ is positive or negative definite if and only if
$P'_{X^0} V_{jj}$ has the same sign (positive or negative, respectively) at $X^i$ for $i=1,\ldots,m$.

Similarly to the proof of Lemma \ref{lem:intersect}, let us choose
$X \in \RR^{d+1}$ so that $\yashar \setminus \{[X^0]\} = \{[X+sX^0]\}_{s \in \RR}$, so that $X^i=X+s_iX^0$,
where $s_1 < \cdots < s_m$
are the zeroes of the univariate polynomial $P(X+sX^0)$.
It follows from Rolle's Theorem that
$\dfrac{d}{ds} P(X+sX^0) = P'_{X^0}(X+sX^0)$ has exactly one zero in each open interval $(s_i,s_{i+1})$,
$i=1,\ldots,m-1$, hence has opposite signs at $s_i$ and at $s_{i+1}$. Therefore
$U(X^0)$ is positive or negative definite if and only if $V_{jj}(X+sX^0)$ 
has opposite signs at $s_i$ and at $s_{i+1}$, i.e., if and only if $V_{jj}$ interlaces $P$.
\end{proof}  

It would be interesting to find an analogue of Theorem \ref{thm:interlace} for other signatures
of a self-adjoint determinantal representation, similarly to \cite[Section 5]{Vin93}.

Combining Theorem \ref{thm:interlace} with the construction of determinantal representations that was
sketched in Section \ref{subsec:divisors} (see also Section \ref{subsec:sing} for the extension
of the construction to the singular case) then yields the following result.

\begin{thm} \label{thm:new_2d}
Let $p \in \RR[x_1,x_2]$ be an irreducible $RZ_{x^0}$ polynomial of degree $m$ with $p(x^0)=1$,
let $P$ be the homogenization of $p$, let $\nu \colon \tcV_P \to \cV_P$ be the desingularization
of the corresponding complex projective curve, and let $\Delta$ be the adjoint divisor on $\tcV_P$.
Let $Q \in \RR[X_0,X_1,X_2]$ be a homogeneous polynomial of degree $m-1$ that interlaces $P$
and that vanishes on the adjoint divisor: $(\nu^* Q) \geq \Delta$.
Then there exist $A_0,A_1,A_2 \in \herm\mat{{\mathbb C}}{m}$ with $A_0+x^0_1A_1+x^0_2A_2=I$
such that $\det(A_0+x_1A_1+x_2A_2)=p(x)$ and such that the first principal minor of $A_0+x_1A_1+x_2A_2$
equals $Q(1,x_1,x_2)$.
\end{thm}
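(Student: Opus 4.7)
The plan is to apply the adjoint-matrix construction of Sections~\ref{subsec:detrep_and_adj}--\ref{subsec:divisors} (as extended to the possibly singular case in Section~\ref{subsec:sing}) with $Q$ in the role of the auxiliary polynomial, producing a Hermitian determinantal representation $U_0$ of some nonzero real scalar multiple of $P$ whose adjoint matrix $\adj U_0$ has $(1,1)$ entry proportional to $Q$, and then to apply a Cholesky-type conjugation at $X^0$ that both sends $U_0(X^0)$ to $I$ and fixes up the proportionality constants so that the final matrix has determinant exactly $p$ and first principal minor exactly $\qut$. I rescale $Q$ by a positive real scalar so that $Q(X^0)=1$; this does not affect interlacing and is, in any case, forced on us by evaluating the conclusion at $X^0$.

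The first step --- and the main obstacle --- is the divisor decomposition $(\nu^*Q) = D + D^\tau + \Delta$ on $\tcV_P$. Since $Q$ is real and $(\nu^*Q)\geq\Delta$, the divisor $(\nu^*Q) - \Delta$ is $\tau$-invariant and effective; what must be verified is that its multiplicity at every real fixed point of $\tau$ on $\tcV_P$ is even, so that it splits as $D + D^\tau$. This parity must be extracted from the interlacing hypothesis: a transverse real intersection of $\cV_Q$ with $\cV_P$ at a smooth point would force the real locus of $\cV_Q$ to cross an ovaloid of $\cV_P$, destroying the interlacing pattern described at the end of Section~\ref{subsec:def_interlace}, so all real intersections at smooth points are tangential of even order, and a branch-by-branch local analysis via $\nu$ handles the real branches at singular points of $\cV_P$. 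Choosing $D$ generically enough that $D-(L)$ is not linearly equivalent to an effective divisor, the construction of Section~\ref{subsec:divisors} produces a Hermitian matrix $V = [V_{ij}]$ of homogeneous polynomials of degree $m-1$ as follows: pick a basis $\{V_{11},\ldots,V_{m1}\}$ of the space of polynomials of degree $m-1$ vanishing on $D+\Delta$ with $V_{11}=Q$, set $V_{1j} := \overline{V_{j1}}$, and for $i,j\geq 2$ define $V_{ij}$ by the divisor conditions $(V_{ij}) = D_{\ell,j}^\tau + D_{\ell,i} + \Delta$ together with $V_{ji} = \overline{V_{ij}}$. Setting $U_0 := P^{-(m-2)}\adj V$ then gives a Hermitian matrix of linear homogeneous forms satisfying $V U_0 = \kappa P \cdot I$ for some nonzero real $\kappa$ (nondegeneracy from the generic choice of $D$), from which one reads off $\det U_0 = \kappa^{m-1} P$ and $\adj U_0 = \kappa^{m-2} V$.

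By Theorem~\ref{thm:interlace}, since $(\adj U_0)_{11} = \kappa^{m-2}Q$ is a nonzero real multiple of $Q$ and $Q$ interlaces $P$, $U_0(X^0)$ is definite. From $V(X^0) = \kappa U_0(X^0)^{-1}$ together with $V_{11}(X^0) = Q(X^0) = 1 > 0$ I deduce that $V(X^0)$ is positive definite and that the sign of $\kappa$ matches the definiteness of $U_0(X^0)$. The determinant parity $\det U_0(X^0) = \kappa^{m-1}$ then forces $U_0(X^0) < 0$ to occur only when $m$ is odd, in which case replacing $U_0$ by $-U_0$ preserves $V$ (since $\adj(-U_0) = (-1)^{m-1}\adj U_0$ and $m-1$ is even) and brings us to $U_0(X^0) > 0$, $\kappa > 0$.

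Finally, let $M := U_0(X^0)^{-1} > 0$, let $T$ be the unique upper-triangular $m\times m$ matrix with positive diagonal entries such that $T^*T = M$, and set $\tilde U := T U_0 T^*$. Then $\tilde U(X^0) = I$ and $\det\tilde U = |\det T|^2 \det U_0 = \kappa^{-(m-1)}\cdot\kappa^{m-1}P = P$. Because $T$ is upper triangular, $(T^{-1})_{i1} = 0$ for $i\geq 2$, and a short computation yields $(\adj\tilde U)_{11} = |\det T|^2|T_{11}|^{-2}(\adj U_0)_{11} = |\det T'|^2(\adj U_0)_{11}$, where $T'$ is the lower-right $(m-1)\times(m-1)$ block of $T$. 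The numerical identity $|\det T'|^2 = \det M / M_{11} = \kappa^{-(m-1)} / \bigl(\kappa^{m-2}Q(X^0)/\kappa^{m-1}\bigr) = \kappa^{-(m-2)}/Q(X^0) = \kappa^{-(m-2)}$, valid precisely by the normalization $Q(X^0)=1$, then yields $(\adj\tilde U)_{11} = \kappa^{-(m-2)}\cdot\kappa^{m-2}Q = Q$. Writing $\tilde U = X_0 A_0 + X_1 A_1 + X_2 A_2$ with $A_0, A_1, A_2 \in \herm\mat{\CC}{m}$ completes the proof.
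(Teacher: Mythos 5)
The proposal correctly identifies the two ingredients—the decomposition $(\nu^*Q) = D + D^\tau + \Delta$ forced by interlacing, and the Dixon adjoint-matrix construction followed by a Cholesky normalization at $X^0$ (these bookkeeping computations with $T$ and $T'$ are fine)—but it skips the heart of the argument. You write ``Choosing $D$ generically enough that $D-(L)$ is not linearly equivalent to an effective divisor'' and later ``nondegeneracy from the generic choice of $D$.'' There is no genericity to appeal to here. Once the real intersection points are split evenly between $D$ and $D^\tau$ (which is forced by the contact condition and by $D_\ell = D^\tau$), the only remaining freedom is a \emph{finite, discrete} choice of how to distribute the conjugate pairs of non-real intersection points. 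In the extreme but typical case where all of $\cV_Q \cap \cV_P$ is real, $D$ is uniquely determined and there is no choice at all. So the non-effectivity of $D - (L)$—which is precisely the hypothesis the construction of Section~\ref{subsec:divisors} needs—must be \emph{proved}, and it must be proved for every admissible $D$.

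That is exactly what the paper's proof supplies, and your argument never touches it. The paper changes coordinates so $[X^0]$ becomes $[0,0,1]$, observes that interlacing makes the meromorphic differential $\omega = \nu^*\bigl(Q(1,x_1,x_2)\,dx_1 / (\partial p/\partial x_2)\bigr)$ have constant sign on $\tcV_P(\RR)$, uses the fact that $p$ being $RZ$ forces $\tcV_P$ to be a real curve of dividing type, computes $(\omega) = D + D^\tau - 2(X_0)$, and then derives a contradiction: if $D - (X_0) \sim E \geq 0$ via some rational $f$, then $f\omega f^\tau$ would be a nonzero holomorphic differential of constant sign on $\tcV_P(\RR)$, impossible by Cauchy's theorem. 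This is where the $RZ$ hypothesis on $p$ and the dividing-type structure of the curve actually enter; your proposal uses neither. Without this step, the proposal does not establish that the matrix $V$ you build has $\det V \not\equiv 0$, so the putative representation $U_0$ may simply not exist.
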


We emphasize that the determinantal representation $A_0+x_1A_1+x_2A_2$ is given by an explicit algebraic construction
starting with $P$ and $Q$.
Theorem \ref{thm:new_2d} implies a version of Theorem \ref{thm:2d} for positive self-adjoint determinantal representations
since there certainly exist interlacing polynomials vanishing on the adjoint divisor:
we can take the directional derivative $Q = P^{(1)}_{x'}$ for any interior point $x'$ of the rigidly convex algebraic 
interior containing $x^0$ with a minimal defining polynomial $p$.
The two basic open questions here are:
\begin{enumerate}
\item
``How many'' positive self-adjoint determinantal representations does one obtain starting with directional derivatives
as above?
\item
What other methods are there to produce interlacing polynomials (vanishing on the adjoint divisor)?
\end{enumerate}

\begin{proof}[Proof of Theorem \ref{thm:new_2d}]
It is not hard to see that $Q$ interlacing $P$ implies that $\cV_Q$ is contact to $\cV_P$ at real points of intersection,
and that we can write $(Q) = D + D^\tau + \Delta$. It only remains to show that $D-(L)$ is not linearly equivalent to 
an effective divisor, where $L$ is a linear form. 

Notice that $\tau$ lifts to an antiholomorphic involution on the desingularization (this was already implied
when we wrote, e.g., $D^\tau$). Furthermore, the fact that $p$ is a $RZ$ polynomial, implies that 
$\tcV_P$ is a compact real Riemann surface of dividing type, i.e., $\tcV_P \setminus \tcV_P(\RR)$ consists
of two connected components interchanged by $\tau$, where $\tcV_P(\RR)$ is the fixed point set of $\tau$,
see \cite{Vin93} and the references therein and \cite{HV07}.
We orient $\tcV_P(\RR)$ as the boundary of one of these two connected components.

It is now convenient to change projective coordinates so that $[X^0]=[1,x^0]$ becomes $[0,0,1]$.
It is not hard to see that in the new coordinates, both the meromorphic differential $\nu^* dx_1$ and the function
$\nu^* \dfrac{Q(1,x_1,x_2)}{\partial p/\partial x_2}$ have constant sign (are either everywhere nonnegative or everywhere
nonpositive) on $\tcV_P(\RR)$.
It follows that so is the meromorphic differential $\omega = \nu^* \dfrac{Q(1,x_1,x_2) dx_1}{\partial p/\partial x_2}$.
We have (see, e.g., \cite[Appendix A2]{ACGH85}) $(\omega)=(Q) - \Delta -2(X_0) = D+D^\tau-2(X_0)$. 
If there existed a rational function $f$ and an effective
divisor $E$ on $\tcV_P$ so that $(f) + D - (X_0) = E$, we would have obtained that $(f \omega f^\tau) = E + E^\tau$,
i.e., $f \omega f^\tau$ is a nonzero holomorphic differential that is everywhere nonnegative or everywhere nonpositive
on $\tcV_P(\RR)$, a contradiction since its integral over $\tcV_P(\RR)$ has to vanish by Cauchy's Theorem.  
\end{proof} 

Notice that this proof is essentially an adaptation of \cite[Proposition 4.2]{Vin93} which is itself an adaptation
of \cite{Fay73}; it would be interesting to find a more elementary argument.

\end{document}